\newtheorem{theorem}{Theorem}[section]
\newtheorem*{theorem*}{Theorem}
\newtheorem{lemma}{Lemma}[section]
\newtheorem{corollary}{Corollary}[section]
\newtheorem{claim}{\sc Claim}
\theoremstyle{definition}
\newtheorem{definition}{\bf Definition}[section]
\newtheorem*{definition*}{\bf Definition}
\newtheorem{example}{\bf Example}[section]
\newtheorem{remark}{Remark}
\newtheorem*{remark*}{Remark}
\newtheorem*{example*}{\bf Example}
\begin{document}

	\title[Semilinear elliptic systems]{An infinite-dimensional mountain pass theorem with applications to nonlinear elliptic systems}

	\author{Ablanvi Songo$^{1,a}$}

\address{$^a$Universit\'{e} de Sherbrooke, D\'{e}partement de math\'{e}matiques, Sherbrooke, Qu\'{e}bec, Canada}

\email{\href{mailto:ablanvi.songo@usherbrooke.ca}{{\textcolor{blue}{ablanvi.songo@usherbrooke.ca}}}}
\footnote{Corresponding author}
\author{ Fabrice Colin${^b}$}

\address{$^b$Laurentian University, School of Engineering and Computer Science, Sudbury, Ontario, Canada}

\email{\href{mailto:fcolin@laurentian.ca}{{\textcolor{blue}{fcolin@laurentian.ca}}}}

	\noindent
	\keywords{Semilinear Hamiltonian system, strongly indefinite functionals,  generalized Mountain Pass Theorem}
	
	\subjclass[2020]{35A15; 35J61; 58E05}

	\begin{abstract}
		The purpose of this paper is to establish a critical point theorem, which is an infinite-dimensional generalization of the classical generalized Mountain Pass Theorem of P. H. Rabinowitz \cite[Theorem 5.3]{Ra}. As an application, we obtain the existence of at least one nontrivial solution to a semilinear elliptic systems with indefinite weights in $\mathbb{R}^2$.
	\end{abstract}

	\maketitle

	\tableofcontents

	\section{Introduction}
    In \cite{Ra 2} (see also \cite[Theorem 5.3]{Ra}), P. H. Rabinowitz stated the following generalization of the classical Mountain Pass Theorem:
    \begin{theorem}
    \label{theo 1.1}
        Let $\Big(E, \|\cdot\|\Big)$ be a real Banach space with $E=V\oplus W$, where $V$ is finite dimensional. Suppose $I\in\mathcal{C}^1(E, \mathbb{R})$, satisfies $(PS)$, that is, any sequence $(u_m) \subset E$ for which $I(u_m)$ is bounded and $I'(u_m)\to 0$ as $m\to \infty$ possesses a convergent subsequence. Suppose also that $I$ satisfies 
        \begin{enumerate}
            \item[$(I_1)$] there are constants $\rho$, $\alpha>0$ such that $I(u)\ge \alpha$ on $\Big\{ u\in W\;|\; \|u\|=\rho\Big\}$, 
            \item[$(I_2)$] there is an $e\in W$ such that $\|e\|=1$ and $R>\rho$ such that if \[M:= \Big\{ u\in V \;|\; \|u\|\le R\Big\}\oplus \Big\{ re\;|\; 0\le r\le R \Big\},\] then $\underset{u\in \partial M}{\max}\; I(u)\le0$. 
        \end{enumerate}
        Then $I$ possesses a critical value $c\ge  \alpha$ which can be characterized as
        \[c:= \underset{h\in \Gamma}{\inf}\; \underset{u\in  M}{\max}\; I(h(u)),\]
        where \[\Gamma := \Big\{ h\in\mathcal{C}(M, E)\;|\; h=\operatorname{id}\; \text{on}\; \partial M\Big\}.\]
    \end{theorem}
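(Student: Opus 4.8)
The plan is to combine a finite-dimensional linking argument, run through the Brouwer degree, with a deformation argument powered by the Palais--Smale condition; these are the two standard pillars behind Rabinowitz's theorem. First I would note that $c$ is well defined and finite: $\operatorname{id}\in\Gamma$, and since $M$ is compact and $I$ continuous, $c\le\max_{u\in M}I(u)<\infty$. The two substantive claims are then that $c\ge\alpha$ and that $c$ is a critical value.

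To get $c\ge\alpha$ I would establish the linking property: every $h\in\Gamma$ satisfies $h(M)\cap S\ne\emptyset$, where $S:=\{u\in W:\|u\|=\rho\}$, on which $(I_1)$ provides the lower bound $I\ge\alpha$. Writing $\pi_V,\pi_W$ for the continuous projections of $E=V\oplus W$, I would define $F:M\to V\times\mathbb{R}$ by $F(u)=\bigl(\pi_V h(u),\,\|\pi_W h(u)\|-\rho\bigr)$. Here $M$ is a compact subset of the $(\dim V+1)$-dimensional space $V\oplus\mathbb{R}e$, and the target has the same dimension, so the Brouwer degree $\deg(F,\operatorname{int}M,0)$ makes sense once $0\notin F(\partial M)$. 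On $\partial M$ we have $h=\operatorname{id}$, so writing $u=v+re$ with $v\in V$ and $0\le r\le R$ gives $F(u)=(v,r-\rho)$; this vanishes only at $u=\rho e$, which is interior because $0<\rho<R$ and $\|e\|=1$. Thus $F$ is admissible, and by homotopy invariance applied to $tF+(1-t)F_0$ — which reduces to the model map $F_0(v+re)=(v,r-\rho)$ on $\partial M$ and is therefore nonzero there — the degree equals that of $F_0$. Since $F_0$ is the restriction of an affine isomorphism whose single interior preimage of $0$ is $\rho e$, one finds $\deg(F,\operatorname{int}M,0)=\pm1\ne0$, so $F$ has a zero $u_0$, i.e. $h(u_0)\in S$. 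Then $\max_{u\in M}I(h(u))\ge\inf_S I\ge\alpha$, and taking the infimum over $\Gamma$ yields $c\ge\alpha$.

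For the criticality of $c$ I would argue by contradiction. Supposing $c$ is a regular value, the $(PS)$ condition lets me invoke the deformation theorem to produce $\bar\epsilon>0$ and a homeomorphism $\eta$ of $E$ with $\eta=\operatorname{id}$ on $\{\,|I-c|\ge 2\bar\epsilon\,\}$ and $\eta(A_{c+\epsilon})\subset A_{c-\epsilon}$ for some $0<\epsilon<\bar\epsilon$, where $A_s:=\{u:I(u)\le s\}$. Because $c\ge\alpha>0$ I may shrink $\bar\epsilon$ so that $c-2\bar\epsilon>0$. Choosing $h\in\Gamma$ with $\max_{u\in M}I(h(u))\le c+\epsilon$ and setting $g:=\eta\circ h$, I would check that $g\in\Gamma$: on $\partial M$, $h=\operatorname{id}$ and $I\le 0<c-2\bar\epsilon$ by $(I_2)$, so $\eta$ fixes those points and $g=\operatorname{id}$ on $\partial M$. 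But $g(M)\subset\eta(A_{c+\epsilon})\subset A_{c-\epsilon}$ forces $\max_{u\in M}I(g(u))\le c-\epsilon<c$, contradicting the definition of $c$. Hence $c$ is a critical value with $c\ge\alpha$.

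The hard part is the linking step: one must arrange the finite-dimensional test map $F$ so that its degree is at once well defined (no boundary zeros) and explicitly computable, and the decisive geometric fact is exactly that the lone candidate zero $\rho e$ is driven into $\operatorname{int}M$ by the inequality $\rho<R$ together with $e\in W$, $\|e\|=1$. The deformation step is then essentially bookkeeping, the one point needing care being that the deformation levels straddle $0$ — guaranteed by $c\ge\alpha>0$ together with $I\le 0$ on $\partial M$ — so that $\eta\circ h$ remains in $\Gamma$.
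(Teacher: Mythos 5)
Your proof is correct, and it is the standard Rabinowitz argument. Note first that the paper does not actually prove Theorem \ref{theo 1.1}: it quotes it from Rabinowitz and, in the remark following Theorem \ref{theorem 3.1}, treats it as the finite-dimensional case of that generalization. Measured against the paper's proof of Theorem \ref{theorem 3.1}, your linking step is essentially identical: your homotopy $tF+(1-t)F_0$ with $F(u)=\bigl(\pi_V h(u),\ \|\pi_W h(u)\|-\rho\bigr)$ is precisely the finite-dimensional specialization of the paper's map $H(t,u)=tP(\gamma(u))+(1-t)y+\bigl(t\|\gamma(u)-P\gamma(u)\|+(1-t)r-\rho\bigr)e$, with the Brouwer degree playing the role of the Kryszewski--Szulkin degree (which is exactly what that degree reduces to when $\dim V<\infty$). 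The only genuine divergence is in how criticality is extracted: you argue by contradiction via the classical deformation theorem, whereas the paper invokes a quantitative minimax principle (Theorem \ref{theorem 2.1} and Corollary \ref{cor 2.1}) to produce a Palais--Smale sequence at level $c$ and then applies $(PS)$; the two routes are interchangeable here, the paper's being the one that survives the passage to infinite-dimensional $X^-$ where the classical deformation in the norm topology is unavailable. One small point worth flagging: as printed, $(I_1)$ says $I\le 0$ on $\partial B_\rho\cap W$, which is a typo (compare $(J_1)$ of Theorem \ref{theorem 3.1}); you correctly read it as $I\ge\alpha$ there, which is what the conclusion $c\ge\alpha$ requires.
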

    
	In this paper, we deal with the existence of a nontrivial solution to the following class of Hamiltonian system of the form
	\begin{equation}
		\label{eq 1}
		\begin{cases}
			\Delta u -u = H_u(x,u,v),\; x \in \mathbb{R}^2,\\
			-\Delta v +v= H_v(x,u,v), \;x \in \mathbb{R}^2, \\
			u,v \in H^{1}(\mathbb{R}^2).
		\end{cases}
	\end{equation}
	We are interested in the subcritical growth (at $+\infty$) case, that is, for each $\beta>0$, there exists a constant $K>0$ such that for all $(x,u,v) \in \mathbb{R}^2\times [0,\infty)\times[0,\infty)$, we have
			\begin{equation}
			\label{eq 2}
            H_u(x,u,v) \le Ke^{\beta u^2}\;\;\text{and} \;\;H_v(x,u,v) \le Ke^{\beta v^2},
		\end{equation}
 where $H_w$ designates the partial derivative of $H$ with respect to $w$.
	
	If, we define in $H^{1}(\mathbb{R}^2)\times H^{1}(\mathbb{R}^2)$ the functional
	\begin{equation}
		\label{eq 3}
		J(u,v)= \int_{\mathbb{R}^2} \dfrac{1}{2}\Bigg(\Big( |\Delta v(x)|^2 +v^2(x)\Big) -\Big(|\Delta u(x)|^2 +u^2(x)\Big) \Bigg)dx -\int_{\mathbb{R}^2} \Big(H(x,u,v)\Big)dx,
	\end{equation}
	then, it is well known that the first part of the functional $J$ is strongly indefinite operator and under suitable assumptions, $J$ is of class $\mathcal{C}^1$ on $H^{1}(\mathbb{R}^2)\times H^{1}(\mathbb{R}^2)$, and its critical points are weak solutions of $(\ref{eq 1})$ . Precisely, the functional $J$ has the form
	\begin{equation*}
		\dfrac{\|Q(u,v)\|^2}{2}- \dfrac{\|P(u,v)\|^2}{2} -\varphi(x,u,v), 	\end{equation*}
	for every $(u,v)\in \Big( X= Y\oplus Z:= H^{1}(\mathbb{R}^2)\times H^{1}(\mathbb{R}^2), \|\cdot \|\Big)$, where $Y$ and $Z$ are infinite-dimensional subspaces of $X$, $P$ and $Q$ are, respectively, the orthogonal projections of $X$ into $Y$ and $Z$.\\

    There exist many papers in the literature devoted to the study of the existence of solutions to the semilinear elliptic systems, both in the whole space and in bounded or unbounded domains; see for example \cite{ FDR, SC, CF, ZL, FY, LY, BF, BC, Songo} and the references therein. The sub-critical case was treated for instance in \cite{FDR, ZL, Sodo}.\\
	
		To apply Theorem $\ref{theo 1.1}$, it is crucial that $\dim Y < +\infty$. However, since the functional $J$ given in $(\ref{eq 3})$ has a strongly indefinite quadratic part, the classical generalized Mountain Pass Theorem (Theorem $\ref{theo 1.1}$) cannot be applied to obtain a solution of $(\ref{eq 1})$. Therefore, it is of particular interest to study such a strongly indefinite case.\\

        In \cite{FDR}, using Theorem $\ref{theo 1.1}$ and  Galerkin approximation procedure, the authors obtained the existence of nontrivial solutions  for the following system of two coupled semilinear Poisson equations:
        \begin{equation*}
		(S)
		\begin{cases}
			-\Delta u = g(v),\; v>0,\; x \in \Omega,\\
			-\Delta v = f(u), \; u>0,\;x \in \Omega, \\
			u,\,v \in H^{1}_0(\Omega),
		\end{cases}
	\end{equation*}
where $\Omega$ is a bounded domain in $\mathbb{R}^2$ with smooth boundary $\partial \Omega$, and the nonlinearities $f$ and $g$ have the maximal growth.\\

     We would like to point out that there are various infinite-dimensional generalizations of the Theorem $\ref{theo 1.1}$ in the literature that can help find a nontrivial solution to problem $(\ref{eq 1})$. Among others, we can mention \cite{KS, BB, CW}.\\

       In this paper, we propose a new way to generalize Theorem $\ref{theo 1.1}$ and demonstrate its applicability to strongly indefinite semilinear elliptic systems $(\ref{eq 1})$ with sub-critical linear nonlinearities. To do so, we consider on the Hilbert space $X$ the $\tau-$topology introduced by Kryszewski and Szulkin \cite{KS}. We replace the set $\Gamma$ in Theorem $\ref{theo 1.1}$ by the following noempty set\\
         \begin{center}
		$\Gamma : = \Big\{\gamma : M \rightarrow X \;\Big|\;\gamma \;\; \text{is}\;\; \tau-$continuous, $\gamma \big|_{ \partial M} =\operatorname{id}$ and\\ every  $u\in \operatorname{int}(M)$ has a $\tau-$neighborhood $N_u$ in $X$ such that $(\operatorname{id}-\gamma)(N_u\cap \operatorname{int}(M))$ \\ is contained in a finite-dimensional subspace of $X \Big \}$,
			\end{center}
       where $\operatorname{int}(M)$ is the interior of $ M:= \{ u\in Y \;|\; \|u\|\le R\}\oplus \{ re\;|\; 0\le r\le R\}$ with $R>\rho$ and $e\in Z$ such that $\|e\|=1$.
In addition, we suppose that $J$ is $C^1-$functional such that $J$ is $\tau-$upper semi-continuous, $\nabla J$ is weakly sequentially continuous and $J$ fulfills the linking geometry
\begin{equation*}
				\underset{u\in \partial B_\rho\cap Z}{\inf} \; J(u) \ge \alpha > 0\ge \underset{u \in \partial M}{\sup}\; J(u) . 
			\end{equation*}
Then, we show that there is a Palais-Smale sequence at level $c :=\underset{\gamma \in \Gamma}{\inf} \; \underset{u\in M}{\sup}\; J(\gamma(u))$. It follows that if the functional $J$ satisfies the $(PS)_c$ (see Definition $\ref{def 2.3}\; (3)$), then $c$ is a critical value of $J$; making our infinite-dimensional result a natural way to generalize Theorem $\ref{theo 1.1}$ (see Theorem $\ref{theorem 3.1}$).	The main difficulty here lies in the lack of compactness due to
the unboundedness of the domain, as well as the fact that the nonlinear terms $H_u(x,u,v)$ and $H_v(x,u,v)$ exhibit subcritical exponential growth in the sense of the Trudinger–Moser inequality. By applying our main result (see Theorem $\ref{theorem 3.1}$) together with the Trudinger–Moser inequality \cite{OS}, we prove that problem $(\ref{eq 1})$ admits a nontrivial solution.      
The proof of our abstract result is based on an infinite-dimensional general minimax result proved by the two authors in \cite{CS} (see \cite[Theorem 2.1]{CS}).\\

   By using our abstract result, the use of any Galerkin reduction method is no longer required to obtain a nontrivial solution of problem $(S)$.\\
    
	To the best of our knowledge, there are no results in the literature establishing the existence of solutions to the semilinear elliptic systems of the form $(\ref{eq 1})$ in the whole space.
    
     We believe that our abstract result can be applied to a broader class of indefinite functionals,
	in particular to strongly indefinite ones; that is, functionals of the form
	\begin{equation*}
		J(u) = \dfrac{1}{2} \langle Lu, u \rangle - \Psi(u)
	\end{equation*}
	defined on a Hilbert space $X$, where $L : X \to X$ is a self-adjoint operator whose negative and positive eigenspaces are both infinite-dimensional.\\

	The paper is organized as follows. In the next Section, we recall some classical results and present additional preliminary results which will be used later. In Section 3, we state and prove our main result, and in Section 4 we apply it to obtain a nontrivial solution of $(\ref{eq 1})$.
	
		\section{Kryszewski-Szulkin degree theory and a general minimax principle}
	\subsection{Kryszewski-Szulkin degree theory} In this subsection, we are following the presentation of the degree theory of Kryszewski and Szulkin given in \cite{Wi} by Michel Willem.
	
	Let $Y$ be a real separable Hilbert space endowed with inner product $( \cdot, \cdot )$ and the associated norm $\|\cdot\|$.  
	
	On $Y $ we consider the $\sigma-$topology introduced by Kryszewski and Szulkin; that is, the topology generated by the norm 
	\begin{equation}
    \label{eq 4}
		|u|_\sigma := \sum_{k =0}^{\infty} \frac{1}{2^ {k+1}}|(u, e_k)|, \;\; u \in Y,
	\end{equation}
	where $(e_k)_{k\ge 0}$ is a total orthonormal sequence in $Y$.\\
	
	\begin{remark}	
		\label{rm 1}
		By the Cauchy-Schwarz inequality, one can show that $|u|_\sigma \le \|u\|$ for every $u \in Y$. Moreover, if $(u_n)$ is a bounded sequence in $Y$ then 
		\begin{equation*}
			u_n \rightharpoonup u \Longleftrightarrow u_n \overset{\sigma}{\rightarrow} u,
		\end{equation*}
		where $\rightharpoonup$ denotes the weak convergence and $ \overset{\sigma}{\rightarrow}$ denotes the convergence in the $\sigma-$topology.
	\end{remark}
	Let $U$ be an open bounded subset of $Y$ such that its closure $\overline{U}$ is $\sigma-$closed.
	\begin{definition}[\cite{Wi}]
		\label{definition 2.1}
		A map $f : \overline{U} \rightarrow Y$ is $\sigma-$admissible (admissible for short) if\\
		(1) $f$ is $\sigma-$continuous, \\
		(2) each point $u \in U$ has a $\sigma-$neighborhood $N_u$ in $Y$ such that $(\operatorname{id}-f)(N_u\cap U)$ is contained in a finite-dimensional subspace of $Y$.
	\end{definition}
	\begin{definition}[\cite{Wi}]
    \label{def 2.2}
		A map $h :[0,1]\times \overline{U}\rightarrow Y$ is an admissible homotopy if \\
		(1)  $0 \notin h([0,1] \times \partial U)$,\\
		(2) $h$ is $\sigma-$continuous, that is $t_n \rightarrow t$ and $u_n \overset{\sigma}{\rightarrow} u$ implies $h(t_n,u_n) \overset{\sigma}{\rightarrow} h(t,u)$,\\
		(3) $h$ is $\sigma-$locally finite-dimensional. That is, for any $(t,u) \in[0,1]\times U$ there is a neighborhood $N_{(t,u)}$ in the product topology of $[0,1]$ and $(X, \sigma)$ such that \[\Big\{ v-h(s,v) \; | \; (s,v) \in N_{(t,u)} \cap \Big([0,1]\times U\Big) \Big\}\] is contained in a finite-dimensional subspace of $Y$.
	\end{definition} Then, for an admissible homotopy, by homotopy invariance (see \cite[Theorem 6.6]{Wi}), we have
	\begin{equation*}
		deg \Big(h(0,.), U\Big) = deg\Big(h(1,.),U\Big),
	\end{equation*}
	where $deg$ is the topological degree of $f$ (about 0). Such a degree is called Kryszewski and Szulkin degree (see \cite[Definition 6.3]{Wi}). By existence property (see \cite[Theorem 6.6]{Wi}), if $f : \overline{U} \rightarrow Y$ is admissible with $0 \notin f(\partial U)$ and $deg\Big(f, U\Big)\ne 0$, then there exists $u \in U$ such that $f(u)= 0$.\\
	
	Now, let $X = X^{-}\oplus X^{+}$, where $X^-$ is closed and $X^+=(X^-)^\perp$, be a real separable Hilbert space endowed with the inner product $\langle \cdot, \cdot \rangle$ and the associated norm $\|\cdot\|$. Let $(e_k)_{k\ge 0}$ be an orthogonal basis of $X^-$. We set on $X$, a new norm defined by
	\begin{equation*}
		|u|_\tau := \max \Bigg(\sum_{k =0}^{\infty} \frac{1}{2^ {k+1}}|\langle Pu, e_k\rangle|, \|Qu\| \Bigg), \;\; u \in X.
	\end{equation*}
	Here, $P$ and $Q$ denote the orthogonal projections of $X$ onto $X^-$ and $X^+$, respectively; $\tau$ denotes the topology generated by this norm. The topology $\tau$ was introduced by Kryszewski and Szulkin \cite{KS}. \\
    From $(\ref{eq 4})$, we can write
    \begin{equation*}
        |u|_\tau = \max \Big( |Pu|_\sigma, \|Qu\|\Big), \;\; u\in X.
    \end{equation*}
	\begin{remark}	
		\label{rm 2} For every $u\in X$, 
		we have $\|Qu\| \le \|u\|_\tau$ and $|Pu|_{\sigma} \le \|u\|_\tau$. Moreover, if $(u_n)$ is a bounded sequence in $X$ then 
		\begin{equation}
			\label{eq 5}
			u_n \overset{\tau}{\rightarrow} u \Longleftrightarrow	Pu_n \rightharpoonup Pu \;\;\text{and}\;\; Qu_n \to Qu,
		\end{equation}
		where $\rightharpoonup$ denotes the weak convergence and $ \overset{\tau}{\rightarrow}$ denotes the convergence in the $\tau-$topology.
	\end{remark}
	
	\begin{definition}
    \label{def 2.3}
		Let $J \in \mathcal{C}^1(X,\mathbb{R})$. \\
		\begin{enumerate}
			\item We say that the functional $J$ is $\tau-$upper semi-continuous if  $u_n \overset{\tau}{\rightarrow} u$ implies
			\begin{equation*}
				J(u)\ge \overline{\underset{n\to \infty}{\lim}} J(u_n),
			\end{equation*}
            or, equivalently, for every $c\in \mathbb{R}$, the set
		\begin{equation*}
			\Big\{u\in X\,|\, J(u)\ge c\Big\}
		\end{equation*}
		is $\tau-$closed.
			\item  We say that $\nabla J $ is weakly sequentially continuous if \[u_n \rightharpoonup u \;\; \text{implies}\;\; \nabla J(u_n) \rightharpoonup \nabla J(u).\]
			\item 
			The functional $J$ is said to satisfy the $(PS)_c$ condition (or the Palais-Smale condition at level $c$) if any sequence $(u_n)\subset X$ such that 
			\begin{equation*}
				J(u_n) \rightarrow c \;\; \textit{and} \;\; J' (u_n) \rightarrow 0, \;\; \text{as}\;\;n\to \infty,
			\end{equation*}
			has a convergent subsequence.\\
		\end{enumerate}
	\end{definition}
	
	\subsection{A general minimax principle}
	We consider the class of $\mathcal{C}^1-$functionals $J : X \rightarrow \mathbb{R}$ such that 
	
	\text{(A)}	$J$ is $\tau-$upper semi-continuous and $\nabla J$ is weakly sequentially continuous.\\

	We recall the following two results from \cite{CS} that will play a key role in the proof of our abstract result; see \cite[Theorem 2.1 ]{CS} and \cite[Corollary 3.1]{CS}.
	
	\begin{theorem}[General minimax principle]
		\label{theorem 2.1}
		Assume that $J\in \mathcal{C}^1(X, \mathbb{R})$ satisfies $(A)$, that is, $J$ is $\tau-$upper semi-continuous and $\nabla J$ is weakly sequentially continuous. Let $M$ be a closed subset of $X$, and let $M_0$ be a closed subset of $M$. Let $\operatorname{int}(M)$ be the interior of $M$.
		Let us define 
		\begin{center}
			$\Lambda_0 : = \Big\{\gamma_0 : M_0 \rightarrow X \;\Big|\;\gamma_0 $\;\; \text{is}\;\; $\tau-$continuous $\Big\}$,
		\end{center}
		\begin{center}
			$\Gamma : = \Big\{\gamma : M \rightarrow X \;\Big|\;\gamma \;\; \text{is}\;\; \tau-$continuous, $\gamma \big|_{ M_0} \in \Lambda_0$ and\\ every  $u\in \operatorname{int}(M)$ has a $\tau-$neighborhood $N_u$ in $X$ such that $(\operatorname{id}-\gamma)\Big(N_u\cap\; \operatorname{int}(M)\Big)$ \\ is contained in a finite-dimensional subspace of $X \Big \}$.
		\end{center}
		If $J$ satisfies
		\begin{equation}
			\label{eq 6}
			\infty > c := \underset{\gamma \in \Gamma}{\inf}\; \underset{u \in M}{\sup} \;J(\gamma(u))> a := \underset{\gamma_0 \in \Lambda_0}{\sup}\; \underset{u \in M_0}{\sup} \; J(\gamma_0(u)),
		\end{equation}
		Then,
		for every $\varepsilon \in \big( 0, \frac{c-a}{2}\big)$, $ \delta >0$ and $\gamma \in \Gamma$ such that
		\begin{equation}
			\label{eq 7}
			\underset{u\in M}{\sup}\; J (\gamma (u)) \le c +\varepsilon , 
		\end{equation}
		there exists $u \in X$ such that 
		\begin{enumerate}
		\item[$(a)$]\[c-2\varepsilon \le J(u) \le c+2\varepsilon,\]
		\item[$(b)$]\[dist(u, \gamma(M)) \le 2 \delta,\]
		\item[$(c)$]\[\|J' (u)\| < \frac{8\varepsilon}{\delta}.\]
		\end{enumerate}
	\end{theorem}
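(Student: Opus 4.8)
The statement is the author's adaptation, to the $\tau$-topology and the Kryszewski--Szulkin degree framework, of the standard quantitative deformation lemma that underlies minimax theorems. The plan is to prove it by a deformation argument: assuming the conclusion fails, I would produce a $\tau$-continuous flow that pushes $\gamma(M)$ below the level $c$ while fixing $M_0$, thereby contradicting the definition of $c$ as an infimum over $\Gamma$. I explain the four pieces below.

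\emph{Step 1: Localize the obstruction.} Fix $\varepsilon\in(0,\frac{c-a}{2})$, $\delta>0$ and $\gamma\in\Gamma$ satisfying $(\ref{eq 7})$. Suppose, for contradiction, that \emph{no} $u\in X$ satisfies $(a)$--$(c)$ simultaneously. Then on the ``bad'' set
\begin{equation*}
A:=\Big\{u\in X \;\Big|\; c-2\varepsilon\le J(u)\le c+2\varepsilon,\ \dist(u,\gamma(M))\le 2\delta\Big\}
\end{equation*}
one has $\|J'(u)\|\ge \frac{8\varepsilon}{\delta}$ for every $u\in A$. The point of this step is simply to record that the gradient is bounded away from zero on the relevant tube around $\gamma(M)$.

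\emph{Step 2: Construct a pseudo-gradient vector field and the deformation.} On the open set where $\|J'(u)\|\ge\frac{8\varepsilon}{\delta}$ I would take a locally Lipschitz pseudo-gradient vector field $W$ for $J$ with $\|W(u)\|\le 2/\|J'(u)\|$ and $\langle J'(u),W(u)\rangle\ge 1$, cut it off by a locally Lipschitz function $\eta$ that equals $1$ on the tube $\{c-\varepsilon\le J\le c+\varepsilon,\ \dist(\cdot,\gamma(M))\le\delta\}$ and vanishes outside $A$, and integrate $\dot\sigma=-\eta(\sigma)W(\sigma)$. The standard estimates give a flow $\sigma(s,\cdot)$ that decreases $J$ at unit rate on the inner tube, so that after time $s=2\delta$ the composition $\sigma(2\delta,\gamma(u))$ has $J\le c-\varepsilon$ on all of $M$, while remaining within distance $2\delta$ of $\gamma(M)$; the bound $\|J'\|\ge 8\varepsilon/\delta$ is exactly what forces the needed energy drop over this flow time.

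\emph{Step 3: Verify the new path lies in $\Gamma$.} This is where the $\tau$-topology does real work and, to my mind, is the main obstacle. Setting $\widetilde\gamma:=\sigma(2\delta,\gamma(\cdot))$, I must check three things: that $\widetilde\gamma$ is $\tau$-continuous, that $\widetilde\gamma|_{M_0}\in\Lambda_0$, and that $\widetilde\gamma$ satisfies the $\tau$-local finite-dimensionality condition defining $\Gamma$. The first two follow once the flow itself is shown to be $\tau$-continuous and to fix $M_0$ (which holds because on $M_0$ one has $J(\gamma_0(u))\le a<c-2\varepsilon$, so $\eta=0$ there and the flow is stationary). The genuine difficulty is the finite-dimensionality clause: one must argue that $\operatorname{id}-\widetilde\gamma$ is locally finite-dimensional on $\operatorname{int}(M)$. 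Here I would exploit that $J$ is $\tau$-upper semicontinuous with $\nabla J$ weakly sequentially continuous (assumption $(A)$), so that $\nabla J$, hence the pseudo-gradient and the flow's displacement $\operatorname{id}-\sigma(s,\cdot)$, respects the splitting $X=X^-\oplus X^+$ in the way required: on the infinite-dimensional $X^-$ part the $\tau$-topology is the weak topology, and one shows the displacement stays in a finite-dimensional subspace by combining the local finite-dimensionality already built into $\gamma\in\Gamma$ with the fact that the gradient flow adds only a $\tau$-locally finite-dimensional perturbation. This is precisely the mechanism that makes the Kryszewski--Szulkin degree well defined, and verifying it carefully is the technical heart of the proof.

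\emph{Step 4: Derive the contradiction.} Since $\widetilde\gamma\in\Gamma$ and $\sup_{u\in M}J(\widetilde\gamma(u))\le c-\varepsilon<c$, we contradict $c=\inf_{\gamma\in\Gamma}\sup_{u\in M}J(\gamma(u))$. Hence the bad set cannot contain all of the tube, i.e.\ there exists $u\in X$ satisfying $(a)$, $(b)$ and $(c)$ simultaneously, which is the assertion. I expect Steps 1, 2 and 4 to be essentially routine adaptations of the classical quantitative deformation lemma, with all the subtlety concentrated in the $\tau$-continuity and $\tau$-local finite-dimensionality verifications of Step 3.
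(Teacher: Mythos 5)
First, a point of comparison: the paper itself does not prove Theorem \ref{theorem 2.1}. It is recalled verbatim from \cite[Theorem 2.1]{CS} and used as a black box, so there is no in-paper proof to measure your attempt against. Judged on its own terms, your outline follows the route one would expect (and the route taken in the Kryszewski--Szulkin tradition, cf.\ \cite[Chapter 6]{Wi}): negate the conclusion, build a deformation that pushes $\gamma(M)$ below level $c$ while fixing $M_0$, and contradict the definition of $c$. Steps 1 and 4 are fine, and your observation that the flow is stationary on $M_0$ because $a<c-2\varepsilon$ is the right reason the boundary condition survives.

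The genuine gap is in Steps 2--3. A standard pseudo-gradient field $W$, locally Lipschitz for the \emph{norm} topology, produces a flow that is norm-continuous but in general neither $\tau$-continuous nor $\tau$-locally finite-dimensional; consequently the deformed map $\widetilde\gamma$ has no reason to belong to $\Gamma$, and Step 4 collapses. This cannot be repaired after the fact in Step 3: the known proofs build the vector field differently from the outset. For each $u$ in the bad region one takes the \emph{constant} direction $w(u):=2\nabla J(u)/\|\nabla J(u)\|^{2}$, uses the weak sequential continuity of $\nabla J$ (together with the $\tau$-upper semicontinuity of $J$, which controls the relevant sublevel sets) to show that $\langle \nabla J(v), w(u)\rangle>1$ for all $v$ in a $\tau$-\emph{open} neighborhood of $u$, and then glues these constant directions by a locally finite partition of unity subordinate to this $\tau$-open cover. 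The resulting field is $\tau$-locally Lipschitz and takes values, locally, in the span of finitely many vectors $w(u_i)$; that is the only reason the time-$t$ map of the flow is $\tau$-continuous and $\operatorname{id}$ minus the flow is locally finite-dimensional, hence the only reason $\widetilde\gamma\in\Gamma$. Your proposal invokes assumption (A) in Step 3 but never supplies this mechanism; the sentence asserting that ``the gradient flow adds only a $\tau$-locally finite-dimensional perturbation'' is precisely the claim that needs proof. (A smaller calibration point: with your normalization $\|W\|\le 2/\|J'\|$ and $\langle J',W\rangle\ge 1$, the flow time needed to drop the energy by $2\varepsilon$ is $2\varepsilon$, not $2\delta$; the displacement is then at most $\delta/2$, which is consistent with conclusion $(b)$.)
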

	\begin{corollary}
		\label{cor 2.1}
		Suppose that the assumptions of Theorem $\ref{theorem 2.1}$ are satisfied and suppose that $J$ satisfies $(\ref{eq 6})$. Then, there exists a sequence $(u_n)\subset X$ satisfying 
		\begin{equation*}
			J(u_n) \rightarrow c, \;\; J'(u_n) \rightarrow 0, \;\;\text{as}\;\; n\to \infty.
		\end{equation*}
        
		In particular, if $J$ satisfies $(PS)_c$ condition, then $c$ is a critical value $J$.
	\end{corollary}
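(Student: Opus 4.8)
The plan is to derive the Palais–Smale sequence as a direct consequence of Theorem \ref{theorem 2.1} by making a careful choice of the free parameters $\varepsilon$, $\delta$, and $\gamma$. The corollary asserts two things: first, the mere existence of a $(PS)_c$ sequence whenever the minimax inequality \eqref{eq 6} holds; and second, that $c$ is a genuine critical value once the $(PS)_c$ condition is assumed. The second assertion is immediate: if $(u_n)$ satisfies $J(u_n)\to c$ and $J'(u_n)\to 0$, then $(PS)_c$ furnishes a subsequence $u_{n_j}\to u$ in $X$; by continuity of $J$ and $J'$ (recall $J\in\mathcal{C}^1(X,\mathbb{R})$) we get $J(u)=c$ and $J'(u)=0$, so $c$ is a critical value. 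Thus the whole content lies in the first assertion.

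For the first assertion I would construct the sequence by applying Theorem \ref{theorem 2.1} repeatedly along a sequence of shrinking parameters. Fix any sequence $\varepsilon_n\searrow 0$ with $\varepsilon_n\in\bigl(0,\tfrac{c-a}{2}\bigr)$, which is possible precisely because \eqref{eq 6} guarantees $c>a$. For each $n$, the definition of $c$ as an infimum over $\Gamma$ lets me select a map $\gamma_n\in\Gamma$ satisfying the near-optimality condition \eqref{eq 7}, namely $\sup_{u\in M}J(\gamma_n(u))\le c+\varepsilon_n$. Then I would choose a matching $\delta_n>0$; the natural calibration is $\delta_n:=\sqrt{\varepsilon_n}$ (or any $\delta_n\to 0$ with $\varepsilon_n/\delta_n\to 0$), so that the gradient bound produced by the theorem tends to zero. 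With $\varepsilon_n$, $\delta_n$, and $\gamma_n$ in hand, Theorem \ref{theorem 2.1} yields a point $u_n\in X$ with
\begin{equation*}
	c-2\varepsilon_n \le J(u_n)\le c+2\varepsilon_n,\qquad \|J'(u_n)\|<\frac{8\varepsilon_n}{\delta_n}.
\end{equation*}
Letting $n\to\infty$, the first inequality forces $J(u_n)\to c$, while the choice $\varepsilon_n/\delta_n=\sqrt{\varepsilon_n}\to 0$ forces $\|J'(u_n)\|\to 0$. Hence $(u_n)$ is the desired $(PS)_c$ sequence.

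I do not expect a serious obstacle here, since the corollary is essentially a packaging of the quantitative deformation statement of Theorem \ref{theorem 2.1} into an asymptotic one. The only point requiring a little care is the \emph{order of quantifiers}: Theorem \ref{theorem 2.1} allows $\varepsilon$, $\delta$, and the near-optimal $\gamma$ to be chosen independently, so one must verify that for each fixed $\varepsilon_n$ one can indeed find some $\gamma_n\in\Gamma$ meeting \eqref{eq 7} before invoking the conclusion — but this is exactly what the definition of the infimum $c$ provides. Conclusions $(b)$ on the distance to $\gamma(M)$ plays no role in producing the $(PS)_c$ sequence and may simply be discarded. The coupling $\delta_n\to 0$ with $\varepsilon_n/\delta_n\to 0$ is the one design choice that makes both $J(u_n)\to c$ and $J'(u_n)\to 0$ hold simultaneously, and any such calibration works.
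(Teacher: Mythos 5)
Your proposal is correct and is the standard diagonal argument: the paper itself does not reprove Corollary~\ref{cor 2.1} (it is quoted verbatim from \cite[Corollary 3.1]{CS}), but the extraction you describe --- $\varepsilon_n\searrow 0$ in $\bigl(0,\tfrac{c-a}{2}\bigr)$, a near-optimal $\gamma_n$ from the definition of the infimum, and a calibration making $\varepsilon_n/\delta_n\to 0$ --- is exactly how that corollary is obtained from Theorem~\ref{theorem 2.1}, and your treatment of the $(PS)_c$ case via continuity of $J$ and $J'$ is likewise correct. One minor simplification: $\delta_n$ need not tend to $0$ at all; taking $\delta_n\equiv 1$ already gives $\|J'(u_n)\|<8\varepsilon_n\to 0$, since conclusion $(b)$ is discarded anyway.
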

	
	\section{A new generalized Mountain Pass Theorem}
	In this section, we state and prove the main result of this paper, which is an infinite-dimensional generalization of the classical Mountain Pass Theorem of P. H. Rabinowitz \cite[Theorem 5.3]{Ra}.\\

    Let $M$ be a bounded closed subset of $X$, and let $\operatorname{int}(M)$ denote the interior of $M$.
    \begin{definition}
\label{def 3.1}
       A map $\gamma : M \rightarrow X$ is $\tau-$admissible if $\gamma \big|_{ \partial M} = \operatorname{id}$ and
       \begin{enumerate}
           \item[$(i)$] $\gamma$ is $\tau-$continuous,
           \item[$(ii)$] every  $u\in \operatorname{int}(M)$ has a $\tau-$neighborhood $N_u$ in $X$ such that $(\operatorname{id}-\gamma)\Big(N_u\cap\; \operatorname{int}(M)\Big)$ \\ is contained in a finite-dimensional subspace of $X$. 
       \end{enumerate}
    \end{definition}	
	\begin{remark} We have:
    \label{rm 3}
    \begin{enumerate}
        \item[$(i)$] The set of all $\tau-$admissible maps is nonempty.
        \item[$(ii)$] Every $\tau-$admissible map is continuous: indeed, let $(u_n)\subset M$ such that $u_n\to u\in M$. Then, by Remark $\ref{rm 2}$, $u_n\overset{\tau}{\to}u$ and $\gamma(u_n)\overset{\tau}{\to} \gamma(u)$. It follows that $\gamma (u_n)\to \gamma(u)$ because for every sufficiently large $n$, both $\gamma(u_n)$ and $\gamma(u)$ belong to a finite-dimensional subspace of $X$ on which the $\tau$ topology coincides with the norm topology.
    \end{enumerate}
	   
	\end{remark}
Let $\zeta>0$.	Let $B_\zeta$ denote the open ball in $X$ of radius $\zeta$ centered at $0$, and let $\partial B_\zeta$ denote its boundary.
	\subsection{The main result}
	\begin{theorem}[Generalized Mountain Pass Theorem]
		\label{theorem 3.1}
		Let $\Big(X = X^{-}\oplus X^{+}, \|\cdot\|\Big)$ be a real separable Hilbert space with $X^-$ a closed separable subspace of $X$ which could be infinite-dimensional and $X^+=(X^-)^\perp$. Assume that $J\in \mathcal{C}^1(X,\mathbb{R})$ satisfies $(A)$. Suppose that
        \begin{enumerate}
            \item[$(J_1)$] there are constants $\rho$, $\alpha >0$ such that
		\begin{equation}
			\label{eq 8}
			b:= \underset{u\in \partial B_\rho \cap X^+}{\inf} \; J(u) \ge \alpha, \;\; \text{and} 
		\end{equation}
        \item[$(J_2)$] there is an $e \in \partial B_1\cap X^+$ and $R> \rho$ such that if 
		\begin{equation*}
			M:= \Big\{u\in X^-\;|\; \|u\|\le R\Big\}\oplus \Big\{re\;|\;0\le r\le R\Big\},
		\end{equation*}
		then we have
		\begin{equation}
			\label{eq 9}
			\underset{u\in \partial M}{\sup}\; J(u)\le 0.
		\end{equation}
        \end{enumerate}
        
		Let $c \in \mathbb{R}$ be characterized as 
        \[c:= \underset{\gamma \in \Gamma}{\inf} \; \underset{u\in M}{\sup}\; J(\gamma(u)),\] 
			where $\Gamma$ is the set of all $\tau-$admissible maps in Definition $\ref{def 3.1}$.
            
	Then, there exists a Palais--Smale sequence at level $c$ in $X$. That is,  a sequence $(u_n)\subset X$ such that 
		\begin{equation*}
			J(u_n) \rightarrow c \ge \alpha, \;\; J'(u_n) \rightarrow 0, \;\; \text{as}\;\; n\to \infty.
		\end{equation*}
	\end{theorem}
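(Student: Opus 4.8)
The plan is to deduce the conclusion directly from Corollary~\ref{cor 2.1} (equivalently, the general minimax principle of Theorem~\ref{theorem 2.1}), applied with $M$ as in $(J_2)$ and $M_0 := \partial M$. Since every competitor in the $\tau$-admissible class of Definition~\ref{def 3.1} restricts to $\operatorname{id}$ on $\partial M$, the natural reference map is $\operatorname{id}|_{\partial M}$, so the class $\Gamma$ of Theorem~\ref{theorem 2.1} is recovered and the comparison level is $a = \sup_{\partial M} J$. As $J$ satisfies $(A)$ by hypothesis, the only thing left to check is the strict inequality $\infty > c > a$ of $(\ref{eq 6})$; once this holds, Corollary~\ref{cor 2.1} furnishes a Palais--Smale sequence at level $c$, which is exactly the claim. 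Thus everything reduces to three facts: $c < \infty$, $a \le 0$, and the linking bound $c \ge \alpha$.

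The finiteness is immediate, since the identity belongs to $\Gamma$ (it fixes $\partial M$, is $\tau$-continuous, and $\operatorname{id}-\operatorname{id}\equiv 0$ trivially meets the local finite-dimensionality requirement), so that $c \le \sup_{u\in M} J(u)$. Because the closed ball $\{w\in X^-:\|w\|\le R\}$ is weakly, hence $\sigma$-, compact and $\{re:0\le r\le R\}$ is a compact segment, $M$ is $\tau$-compact in view of Remark~\ref{remark 2}; being $\tau$-upper semicontinuous, $J$ attains its supremum on $M$, whence $\sup_M J<\infty$. The inequality $a\le 0$ is precisely hypothesis $(\ref{eq 9})$.

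The heart of the argument is the linking inequality $c\ge\alpha$, which I would obtain from the intersection property: for every $\gamma\in\Gamma$ there is $u^\ast\in M$ with $\gamma(u^\ast)\in\partial B_\rho\cap X^+$. Granting this, $(\ref{eq 8})$ gives $\sup_{u\in M}J(\gamma(u))\ge J(\gamma(u^\ast))\ge b\ge\alpha$, and taking the infimum over $\gamma$ yields $c\ge\alpha>0\ge a$, the desired strict inequality. To prove the intersection property I would work in the separable Hilbert space $Y:=X^-\oplus\mathbb{R}e$, set $U:=\operatorname{int}(M)$, and define $f:\overline{U}=M\to Y$ by
\begin{equation*}
	f(u):=P\gamma(u)+\big(\|Q\gamma(u)\|-\rho\big)e,
\end{equation*}
so that a zero of $f$ gives $P\gamma(u^\ast)=0$ and $\|Q\gamma(u^\ast)\|=\rho$, i.e. $\gamma(u^\ast)\in\partial B_\rho\cap X^+$. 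On $\partial M$ one has $\gamma=\operatorname{id}$, so $f(w+re)=w+(r-\rho)e$, which cannot vanish because on $\partial M$ either $\|w\|=R>0$, or $r=0$, or $r=R>\rho$; hence $0\notin f(\partial U)$. Moreover $f$ is $\sigma$-admissible in the sense of Definition~\ref{definition 2.1}: its $\sigma$-continuity follows from the $\tau$-continuity of $\gamma$ through $(\ref{eq 5})$, while the local finite-dimensionality holds because the $X^-$-component of $\operatorname{id}-f$ equals $P\big[(\operatorname{id}-\gamma)(\cdot)\big]$, which lies in a finite-dimensional subspace by Definition~\ref{def 3.1}$(ii)$, the $\mathbb{R}e$-component being one-dimensional.

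Finally I would compute the degree of $f$ by the admissible homotopy $h(t,u):=tf(u)+(1-t)\big(Pu+(\|Qu\|-\rho)e\big)$, whose second endpoint coincides with $u\mapsto u-\rho e$ on $M$ and which agrees with $f$ on $\partial M$, so it never vanishes there. Homotopy invariance of the Kryszewski--Szulkin degree then gives $\deg(f,U)=\deg(\operatorname{id}-\rho e,U)=1$, since $\rho e\in U$; as $\deg(f,U)\ne 0$, the existence property of the degree produces a zero of $f$ in $U$, establishing the intersection property and hence $c\ge\alpha$. The main obstacle is exactly this degree step: because $X^-$ may be infinite-dimensional, the Brouwer-degree linking used in Theorem~\ref{theo 1.1} is unavailable, and one must instead verify the $\sigma$-admissibility of both $f$ and the homotopy $h$, which is precisely where the local finite-dimensionality encoded in the $\tau$-admissible class, Definition~\ref{def 3.1}$(ii)$, becomes indispensable.
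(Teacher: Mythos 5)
Your proposal is correct and follows essentially the same route as the paper: the linking inequality $c\ge\alpha$ is obtained via the Kryszewski--Szulkin degree applied to $f(u)=P\gamma(u)+(\|Q\gamma(u)\|-\rho)e$ on $X^-\oplus\mathbb{R}e$, and your homotopy $h(t,u)=tf(u)+(1-t)(Pu+(\|Qu\|-\rho)e)$ is exactly the paper's $H(t,u)$ written as a convex combination, after which Corollary~\ref{cor 2.1} yields the Palais--Smale sequence. The only difference is that you explicitly verify $c<\infty$ via the $\tau$-compactness of $M$ and the $\tau$-upper semicontinuity of $J$, a point the paper leaves implicit.
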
 
\begin{remark} Observe that:
    \begin{enumerate}
        \item[$(i)$] If the functional $J$ in Theorem $\ref{theorem 3.1}$ satisfies the $(PS)_c$ condition, then \[c= \underset{\gamma \in \Gamma}{\inf} \; \underset{u\in M}{\sup}\; J(\gamma(u))\] is a critical value of $J$ by Corollary $\ref{cor 2.1}$. Here, in Theorem $\ref{theorem 2.1}$, $M_0 =\partial M$ and $\Lambda_0=\{\operatorname{id}\}$.
        \item [$(ii)$] By Reamrk $\ref{rm 3}\,(ii)$ and the fact in finite-dimensional spaces, all norms are equivalent, if $\dim X^- < \infty$, then the conclusion of Theorem $\ref{theorem 3.1}$ remains valid for any functional $J\in \mathcal{C}^1(X, \mathbb{R})$: this is the classical Mountain Pass Theorem $\ref{theo 1.1}$.
    \end{enumerate}
    Therefore, we will only prove Theorem $\ref{theorem 3.1}$ for the case $\dim X^- =\infty$.
\end{remark}
\subsection{Proof of Theorem $\ref{theorem 3.1}$}
Here we prove Theorem $\ref{theorem 3.1}$.
\begin{proof}
The set $M$ is $\sigma-$closed, that is, if $\Big\{u_n =y_n +r_ne\Big\}_{n\in \mathbb{N}}\subset M$ such that $y_n \overset{\sigma}{ \rightarrow } y$ in $X^-$ with $\|y_n\|\le R$ and $r_n \to r$ in $[0,R]$, then $u=y+re \in M$. Indeed, since the set $ \{x\in X^- \;|\; \|x\| \le R\}$ is bounded, by Remark $\ref{rm 1}$,  $y_n \rightharpoonup y$  and $\|y\| \le \underset{n \rightarrow \infty}{\lim}\inf\; \|y_n\|\le R$. Since $0\le r\le R$, we conclude that $u=y+re \in M$.

In order to apply Corollary $\ref{cor 2.1}$, we have only to show that $c\ge \alpha$. To this end, we will prove that, for every $\gamma \in \Gamma$, 
		\begin{equation*}
			\gamma (M) \cap(\partial B_\rho \cap X^+)\ne \emptyset.
		\end{equation*}
        
		Let $P$ denote the projection of $X$ onto $X^-$ such that $PX^+=\{0\}$.
		Let $\gamma \in \Gamma$ and consider the map 
		\begin{eqnarray*}
			H &:& [0,1]\times M \rightarrow X^-\oplus \mathbb{R}e\\  (t,u) &\mapsto& tP(\gamma(u)) +(1-t)y + \Big(t\|\gamma(u)-P\gamma(u)\|+(1-t)r -\rho\Big)e.
		\end{eqnarray*}
		\begin{enumerate}
			\item[(i)] Since $\gamma$ is $\tau-$continuous, the map $H$ is also $\tau-$continuous by construction. 
			\item[(ii)] Let $(t,u)\in[0,1]\times \partial M $. Then $u=y+re$ with $r\in \{0, R\}$ and $\|y\|\le R$ or $0\le r\le R$ and $\|y\|=R$. Since $\gamma =\operatorname{id}$ on $\partial M$ and $Pu =P(y+re) =Py =y$, then 
			\begin{equation*}
				H(t,u) = y+\Big(r-\rho\Big)e \ne 0.
			\end{equation*}
			\item[(iii)] Let $u\in \operatorname{int}(M)$. Then $u$ has a $\tau-$neighborhood $N_u$ in $X$ such that \[(\operatorname{id}-\gamma)\Big(N_u\cap \;\operatorname{int}(M)\Big) \subset E_0,\] where $E_0$ is a finite-dimensional subspace of $X$. Let $v\in N_u\cap\: \operatorname{int}(M)$.\\ We have 
			\begin{equation*}
				v -H(t,v)= tP(v-\gamma(v)) + \Big(-t\|\gamma(v)-P\gamma(v)\| +tr +\rho\Big)e \in E_0\oplus \mathbb{R}e.
			\end{equation*}
		\end{enumerate}
		Since the topology $\sigma$ is induced by the topology $\tau$ on $X^-$, we conclude  that the map 
		\begin{equation*}
			H : [0,1]\times M \rightarrow X^-\oplus \mathbb{R}e
		\end{equation*}
		is  $\sigma-$continuous and $\sigma-$locally finite-dimensional and $0\notin H([0,1]\times \partial M)$.\\ Therefore, $H$ is an admissible homotopy (see Definition $\ref{def 2.2}$), and the Kryszewski-Szulkin degree 
		\begin{equation*}
			deg\Big(H(t,\cdot), \operatorname{int}(M)\Big)
		\end{equation*}
		is independent of $t\in [0,1]$. Hence, by homotopy invariance of Kryszewski-Szulkin degree, we have
		\begin{equation*}
			deg \Big(H(1,\cdot), \operatorname{int}(M)\Big) = deg \Big(H(0,\cdot), \operatorname{int}(M)\Big) = deg\Big(\operatorname{id}-\rho e, \operatorname{int}(M)\Big), 
		\end{equation*}
		where 
		\begin{equation*}
			H(1,u)= P\gamma(u)+\Big(\|\gamma(u)-P\gamma(u)\|-\rho\Big)e, \;\; \text{and}\;\; H(0,u) = u-\rho e.
		\end{equation*}
		Because $0<\rho<R$, then $\rho e \in \operatorname{int}(M)$. By normalization property of Kryszewski-Szulkin degree, we have
		\begin{equation*}
			deg \Big(H(1,\cdot), \operatorname{int}(M)\Big) = deg\Big(\operatorname{id}-\rho e, \operatorname{int}(M)\Big) =1.
		\end{equation*}
		By existence property of Kryszewski-Szulkin degree, there exits $ x \in \operatorname{int}(M)$ such that $H(1,x) =0$. This implies that \[P\gamma(x)=0\;\; \text{and} \;\; \|\gamma(x)\|=\rho.\] Consequently, for each $\gamma \in \Gamma$, there exists $x=x(\gamma) \in \operatorname{int}(M)$ such that
		\begin{equation*}
			\gamma(x)= (\operatorname{id}-P)\gamma (x) \in X^+\;\; \text{and}\;\; \|\gamma(x)\| =\rho.
		\end{equation*}
		It follows that for every $\gamma \in \Gamma$, 
		\begin{equation*}
			\alpha\le	b=	\underset{z \in \partial B_\rho\cap X^+}{\inf} J (z) \le J(\gamma(x)) \le \underset{u \in M}{\sup}\; J(\gamma(u)).
		\end{equation*}
		Hence, $c\ge \alpha$ and applying Corollary $\ref{cor 2.1}$, we obtain the existence of $(u_n)\subset X$ such that 
		\begin{equation*}
			J(u_n) \rightarrow c, \;\; J'(u_n) \rightarrow 0, \;\; \text{as}\;\; n\to \infty.
		\end{equation*} The proof of Theorem $\ref{theorem 3.1}$ is thus complete.
        \end{proof}

	\section{Application}
		In this section, we demonstrate how our main result, Theorem $\ref{theorem 3.1}$, can be employed to establish an existence theorem for nonlinear elliptic systems, in particular for problem $(\ref{eq 1})$.
	\subsection{An existence result}
	Consider the Hilbert space 
	\begin{equation*}
		H^1(\mathbb{R}^2) := \Big\{u\in L^2(\mathbb{R}^2)\;|\; |\nabla u |\in L^2(\mathbb{R}^2)\Big\},
	\end{equation*}
	endowed with the inner product 
	\begin{equation*}
		(u,v)_1 = \int_{\mathbb{R}^2} \Big(\nabla u \nabla v + uv\Big) dx,
	\end{equation*}
	and the associated norm $\|.\|_1$ given by
	\begin{equation*}
		\|u\|_{1}^2 :=	\int_{\mathbb{R}^2} \Big(|\nabla u |^2 + |u|^2\Big) dx.
	\end{equation*}
	
	We denote the product space $X= H^{1}(\mathbb{R}^2)\times H^{1}(\mathbb{R}^2)$, the Hilbert space endowed with the inner product 
	\begin{equation}
		\langle (u,v), (\phi,\psi)\rangle := \int_{\mathbb{R}^2} \Big( \nabla u \nabla \phi + u\phi \Big) dx + \int_{\mathbb{R}^2} \Big(\nabla v  \nabla \psi +v\psi \Big)dx, \;\; \text{for every}\;\; (\phi, \psi)\in X,
	\end{equation}
	and the corresponding norm $\|\cdot\|$ given by
	\begin{equation*}
		\|(u,v),(u,v)\|^2 = \| u \|_{1}^2 + \|v\|_{1}^2.
	\end{equation*}
    
	If we define
	\begin{equation*}
		X^- := \Big\{(u,0) \in X \Big\} \;\; \text{and}\;\;  X^+:=\Big\{(0,v)\in X\Big\},
	\end{equation*}
    
	since we can write $(u,v)$ as
	\begin{equation*}
		\Big(u,v\Big) = \Big(u,0\Big)+\Big(0, v\Big),
	\end{equation*}
	then, $X= X^-\oplus X^+$.
	
	Let us denote by $P$ the projection of $X$ onto $X^-$ and by $Q$ the projection of $X$ onto $X^+$. We have
	\begin{equation*}
	\int_{\mathbb{R}^2} \dfrac{1}{2}\Bigg(\Big( |\nabla v|^2 +v^2\Big) -\Big(|\nabla u|^2 +u^2\Big) \Bigg)dx =  \dfrac{\|Q(u,v)\|^2}{2} -\dfrac{\|P(u,v)\|^2}{2}.
	\end{equation*}
	
	Finally, let us define the functional $J: X \rightarrow \mathbb{R}$ given by
	\begin{align}
		\label{eq 11}
		J(u,v)&= \int_{\mathbb{R}^2} \dfrac{1}{2}\Bigg(\Big( |\nabla v|^2 +v^2\Big) -\Big(|\nabla u|^2 +u^2\Big) \Bigg)dx -\int_{\mathbb{R}^2} \Big(H(x,u,v)\Big)dx \nonumber\\
		&= \dfrac{\|Q(u,v)\|^2}{2} -\dfrac{\|P(u,v)\|^2}{2} -\varphi(,u,v) \nonumber\\
		&= \dfrac{1}{2}\|(0,v)\|^2-  \dfrac{1}{2}\|(u,0)\|^2- \varphi(u,v),
	\end{align}
	where
	\begin{equation*} 
		\varphi(u,v) :=  \int_{\mathbb{R}^2} \Big(H(x,u,v)\Big) dx.\\
	\end{equation*}
		\begin{definition}
		\label{def 4.1}
		We say that $(u,v)$ is a weak solution of problem $(\ref{eq 1})$ if $(u,v)\in X$, and satisfies for any $ (w,z) \in X$: 
		\begin{equation*}
			\int_{\mathbb{R}^2} \Big( \nabla v \nabla z + vz \Big) dx - \int_{\mathbb{R}^2} \Big( \nabla u \nabla w + uw \Big) dx  -\int_{\mathbb{R}^2} \Big( wH_u(x,u,v) +zH_v(x,u,v) \Big)dx =0.\\
		\end{equation*}
	\end{definition}
	
	Throughout this section, $|\cdot|_p$ stands for the $L^p$-norm; $\to$ and $\rightharpoonup$ denote strong and weak convergence, respectively.\\
	
	Our assumptions on $(\ref{eq 1})$ are the following: 
	
	\begin{enumerate}
		\item[$(H_1)$] The function $H \in \mathcal{C}^1\Big(\mathbb{R}^2\times\mathbb{R}\times\mathbb{R}, \mathbb{R}\Big)$ and $H(x,0,0)=0$ for every $x\in \mathbb{R}^2$. 
		\item[$(H_2)$] The nonlinearities $H_u$ and $H_v$ satisfy $(\ref{eq 2})$ and 
		\begin{equation*}
			\lim\limits_{u\to 0}\dfrac{H_u(x,u,v)}{ u}= \lim\limits_{v\to 0}\dfrac{H_v(x,u,v)}{ v}=0, \;\; \text{uniformly with respct to}\;\; x\in \mathbb{R}^2.
		\end{equation*}
			
		\item[$(H_3)$] There exists $\mu>2$ such that for all $(u,v)\ne (0,0)$ and for every $x\in \mathbb{R}^2$, we have 
		\begin{equation*}
			0< \mu H(x,u,v)\le uH_u(x,u,v)+ vH_v(x,u,v),
		\end{equation*}
        with \[uH_u(x,u,v)>0, \;\; vH_v(x,u,v)>0.\]
	\end{enumerate}
    \begin{example}
       Let $x\in \mathbb{R}^2$. Let $r>2$.  Typical examples of functions satisfying the assumptions $(H_1)$, $(H_2)$ and $(H_3)$ are
       \begin{equation*}
       H(x,\theta, \zeta)= e^{-|x|^2}\dfrac{|\theta|^r+|\zeta|^r}{r}, \;\; \text{for every}\;\; (x,\theta,\zeta)\in \mathbb{R}^2\times\mathbb{R}\times\mathbb{R},
       \end{equation*}
       and 
       \begin{equation*}
           H(x,\theta, \zeta)= e^{-|x|^2}\Big[ \Big( |\theta|^3 e^{|\theta|}\Big) + \Big( |\zeta|^3 e^{|\zeta|}\Big)\Big], \;\; \text{for every}\;\; (x,\theta,\zeta)\in \mathbb{R}^2\times\mathbb{R}\times\mathbb{R}.
       \end{equation*}
      \end{example}
		\begin{remark}
		Since assumption $(H_1)$ implies that $H(x,0,0)=0$ for every $x\in \mathbb{R}^2$, then $(u,v)=(0,0)$ is a trivial solution of problem $(\ref{eq 1})$.
	\end{remark}
	
	Here is the main result of this section:
	
	\begin{theorem}
		\label{theo 4.1}
	Under assumptions $(H_1)$, $(H_2)$ and $(H_3)$, problem $(\ref{eq 1})$ admits at least one nontrivial solution.
	\end{theorem}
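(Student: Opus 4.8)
The plan is to apply the abstract Theorem \ref{theorem 3.1} to the functional $J$ of (\ref{eq 11}) on the orthogonal splitting $X = X^-\oplus X^+$, both summands being infinite-dimensional copies of $H^1(\mathbb{R}^2)$, so that the strongly indefinite quadratic part is exactly of the form required. The first task is to show $J\in\mathcal{C}^1(X,\mathbb{R})$ and that it satisfies the structural hypothesis $(A)$. The $\mathcal{C}^1$-regularity and the formula for $J'$ (matching Definition \ref{def 4.1}) follow from $(H_1)$--$(H_2)$ once the potential term $\varphi$ is controlled; here the Trudinger--Moser inequality \cite{OS} is essential, since (\ref{eq 2}) only furnishes critical exponential growth, and the bounds of Remark \ref{rm 3} must be combined with $\int_{\mathbb{R}^2}(e^{\beta w^2}-1)\,dx<\infty$ for $w\in H^1(\mathbb{R}^2)$. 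For $(A)$: the positive quadratic part $\tfrac12\|Q\cdot\|^2$ is $\tau$-continuous and $\varphi\ge 0$ by $(H_3)$, so $\tau$-upper semicontinuity reduces to weak lower semicontinuity of $\|P\cdot\|^2$ together with a Fatou-type bound on $\varphi$; the weak sequential continuity of $\nabla J$ follows from the local compactness of the Sobolev embedding and the weak continuity of the Nemytskii maps generated by $H_u,H_v$, again using Remark \ref{rm 3} for uniform integrability.

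Next I would verify the linking geometry. For $(J_1)$, on $X^+$ one has $J(0,v)=\tfrac12\|v\|_1^2-\int_{\mathbb{R}^2}H(x,0,v)\,dx$; the vanishing of $H_v$ at the origin in $(H_2)$ together with the Trudinger--Moser-controlled growth yields $\int_{\mathbb{R}^2}H(x,0,v)\,dx\le\varepsilon\|v\|_1^2+C_\varepsilon\|v\|_1^{q}$ for some $q>2$, so $J(0,v)\ge\alpha>0$ on a small sphere $\|v\|_1=\rho$. For $(J_2)$, writing a point of $M$ as $(y_1,re_2)$ gives $J=\tfrac12 r^2-\tfrac12\|y_1\|_1^2-\int_{\mathbb{R}^2}H(x,y_1,re_2)\,dx$; since $(H_3)$ forces $H\ge 0$ and the superquadratic bound $H(x,0,sv)\ge s^\mu H(x,0,v)$ for $s\ge 1$, the faces $\{\|y_1\|_1=R\}$ and $\{r=0\}$ give $J\le 0$ automatically, while on the top face $\{r=R\}$ the growth $\mu>2$ forces $J\to-\infty$ as $R\to\infty$, so $(J_2)$ holds for $R$ large.

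With the geometry in place, Theorem \ref{theorem 3.1} produces a Palais--Smale sequence $(u_n,v_n)$ at a level $c\ge\alpha>0$. I would first show it is bounded. Testing $J'(u_n,v_n)$ against $(u_n,0)$ gives $\|u_n\|_1^2+\int_{\mathbb{R}^2}u_nH_u(x,u_n,v_n)\,dx=o(\|u_n\|_1)$, and since $u_nH_u>0$ by $(H_3)$ this already forces $u_n\to 0$ strongly in $H^1$. The combination $J(u_n,v_n)-\tfrac1\mu J'(u_n,v_n)[(u_n,v_n)]\ge(\tfrac12-\tfrac1\mu)(\|v_n\|_1^2-\|u_n\|_1^2)$, valid by $(H_3)$, then bounds $\|v_n\|_1$. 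Passing to a subsequence $v_n\rightharpoonup v_0$, and because $u_n\to 0$ the weak limit is $(0,v_0)$; the weak sequential continuity of $\nabla J$ yields $\nabla J(0,v_0)=0$, so $(0,v_0)$ is a critical point and hence a weak solution of (\ref{eq 1}).

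The main obstacle is compactness: the embedding $H^1(\mathbb{R}^2)\hookrightarrow L^p(\mathbb{R}^2)$ is not compact and the nonlinearity is of critical exponential type, so $(PS)_c$ and the nontriviality of $v_0$ must be established by hand. I would follow the de Figueiredo--Miyagaki--Ruf scheme adapted to this system: first bound the minimax level from above, by testing the definition of $c$ against normalized Moser concentration functions, to place $c$ strictly below the sharp Trudinger--Moser level (of order $1/\beta$); below this threshold the family $\{e^{\beta v_n^2}\}$ is uniformly integrable by \cite{OS}, which forces the convergence $\int_{\mathbb{R}^2}v_nH_v(x,u_n,v_n)\,dx\to\int_{\mathbb{R}^2}v_0H_v(x,0,v_0)\,dx$ and hence $\|v_n\|_1\to\|v_0\|_1$; combined with $v_n\rightharpoonup v_0$ this gives $v_n\to v_0$ strongly, so $(PS)_c$ holds and $c$ is a critical value by Corollary \ref{cor 2.1}. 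A Lions-type vanishing lemma excludes $\int_{\mathbb{R}^2}v_nH_v\to 0$ (which would give $c=0$), while the exclusion of loss of mass at infinity must also be controlled; together these force $v_0\neq 0$, so $(0,v_0)$ is the desired nontrivial solution. The hardest and most technical step is precisely this level estimate tying the abstract minimax value to the optimal Trudinger--Moser constant, since it is what converts the weak limit into a genuine, nonzero critical point.
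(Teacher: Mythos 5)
Your overall skeleton matches the paper's: verify hypothesis $(A)$ and the linking geometry (the paper's Lemmas \ref{lem 4.2}, \ref{lem 4.4}, \ref{lem 4.5}), invoke Theorem \ref{theorem 3.1} to obtain a Palais--Smale sequence at level $c\ge\alpha>0$, prove boundedness, pass to the weak limit, and show that the limit is a nontrivial critical point. Two points of divergence deserve comment.

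Your boundedness argument is genuinely different from, and simpler than, the paper's. Since $J'(u_n,v_n)(u_n,0)=-\|u_n\|_1^2-\int_{\mathbb{R}^2}u_nH_u\,dx$ and both terms are nonpositive by $(H_3)$, you correctly deduce $\|u_n\|_1^2\le\varepsilon_n\|u_n\|_1$, hence $u_n\to0$ strongly in $H^1(\mathbb{R}^2)$; then $J(u_n,v_n)-\tfrac1\mu J'(u_n,v_n)(u_n,v_n)$ together with $(H_3)$ bounds $\|v_n\|_1$. The paper instead keeps both components and controls $\|u_n\|_1+\|v_n\|_1$ through the elementary inequality $st\le(e^{t^2}-1)+s(\log s)^{1/2}$ of do \'O--Medeiros--Severo combined with Remark \ref{rm 3} (its estimates $(\ref{eq 17})$--$(\ref{eq 22})$). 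Your route exploits the sign structure of the $u$-equation and avoids that machinery; both are valid for the problem as stated.

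The genuine gap is in the nontriviality step, which in your proposal is a program rather than a proof, and arguably the wrong program for this problem. You propose to bound the minimax level strictly below the sharp Trudinger--Moser threshold by testing against Moser concentration functions, deduce uniform integrability of $\{e^{\beta v_n^2}\}$, establish full $(PS)_c$, and exclude vanishing via a Lions-type lemma. None of this is carried out, and under the stated hypotheses it is not clear that it can be: there is no assumption (such as a large lower-order coefficient or a fine asymptotic condition on $H$) forcing $c$ below the threshold, and the system is not translation-invariant in $x$, so the vanishing/non-vanishing dichotomy does not by itself deliver a nontrivial limit. More importantly, it is unnecessary: the growth $(\ref{eq 2})$ is subcritical, and the paper settles nontriviality in a few lines by contradiction --- if $(u_0,v_0)=(0,0)$, then a.e.\ convergence, the pointwise domination, and the bounds of Lemma \ref{lem 4.1} give $\int_{\mathbb{R}^2}\big(u_nH_u+v_nH_v\big)\,dx\to0$, hence $\int_{\mathbb{R}^2}H\,dx\to0$ by $(H_3)$, hence $J(u_n,v_n)\to0$, contradicting $c\ge\alpha>0$. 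As written, the decisive step of your argument is missing; you should replace the level-estimate program by this direct contradiction.
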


	To facilitate the proof of Theorem $\ref{theo 4.1}$, we first present a few lemmas. \\
	
	In 2001, do Ó and Souto \cite{OS} proved in the whole space $\mathbb{R}^2$ a version of the Trudinger-Moser inequality, namely, 
	\begin{enumerate}
		\item  if  $u\in H^1(\mathbb{R}^2)$ and  $\beta>0$, then
		\begin{equation}
			\label{eq 12}
			\int_{\mathbb{R}^2}\big(e^{\beta |u|^2}-1\big)dx < + \infty;
		\end{equation}
		\item if $0< \beta < 4\pi$ and $|u|_{L^2(\mathbb{R}^2)} \le m_0 $, then there exists a constant $n_0$, which depends only on $\beta$ and $m_0$, such that 
		\begin{equation*}
			\underset{|\nabla u|_{L^2(\mathbb{R}^2)} \le 1}{\sup} \int_{\mathbb{R}^2}\big(e^{\beta |u|^2}-1\big)dx \le n_0.\\
		\end{equation*}
	\end{enumerate}
	For the proof of the following lemma, see \cite{SC2}.
	
	\begin{lemma}
		\label{lem 4.1}
	Under assumption $(H_1)$, let the nonlinearities $H_u$ and $H_v$ satisfy assumption $(H_2)$. Then, for given $\varepsilon >0$, there exists $ C = C(\varepsilon)  >0$ such that, for every $(x,u,v)\in \mathbb{R}^2\times\mathbb{R}\times\mathbb{R}$, we have
	\begin{equation*}
		|H_u(x,u,v)| \le \varepsilon |u| +C\Big(e^{\beta u^2}-1\Big)\;\; \text{and}\;\; |H_v(x,u,v)| \le  \varepsilon |v|+C\Big(e^{\beta v^2}-1\Big).
	\end{equation*}
	\end{lemma}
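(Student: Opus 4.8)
The plan is to prove the estimate for $H_u$; the bound for $H_v$ then follows verbatim after exchanging the roles of the two scalar variables, so I would only treat $H_u$ in detail. Fix $\varepsilon>0$ and $\delta\ge 1$. The idea is to split the real $u$-axis into a neighbourhood of the origin, a compact intermediate band, and a neighbourhood of infinity, controlling $|H_u(x,u,v)|$ by the first summand $\varepsilon|u|$ on the first piece and by the ``carrier'' $c\,|u|^\delta(e^{\beta u^2}-1)$ on the other two.

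First I would exploit the behaviour of $H_u$ at the origin. By the limit at $0$ recorded in $(H_2)$, there is $s>0$ such that $|H_u(x,u,v)|\le \varepsilon|u|$ for all $|u|\le s$; note that continuity of $H_u$ (which holds since $H\in\mathcal{C}^1$ by $(H_1)$) together with this limit forces $H_u(x,0,v)=0$, so the bound is also valid at $u=0$. On the region $\{|u|\le s\}$ the first summand alone already dominates, and the nonnegative second summand costs nothing.

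Next I would handle large $|u|$. Applying Remark $\ref{rm 3}$ with, say, $K=1$ produces a threshold $T\ge 1$ with $|H_u(x,u,v)|\le e^{\beta u^2}$ for $|u|\ge T$. Since $\delta\ge 1$ gives $|u|^\delta\ge 1$, and since $\lambda:=e^{\beta T^2}/(e^{\beta T^2}-1)$ dominates $e^{\beta u^2}/(e^{\beta u^2}-1)$ on $\{|u|\ge T\}$, one obtains $|H_u(x,u,v)|\le \lambda\,|u|^\delta(e^{\beta u^2}-1)$ there. On the intermediate band $\{s\le|u|\le T\}$ the carrier is bounded below by $\kappa:=s^\delta(e^{\beta s^2}-1)>0$, so if $H_u$ is bounded by some $M$ on this band then $|H_u|\le (M/\kappa)\,|u|^\delta(e^{\beta u^2}-1)$. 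Taking $c:=\max\{\lambda,\,M/\kappa\}$ then yields the claimed inequality on all of $\mathbb{R}$, and repeating the argument with $(u,H_u)$ replaced by $(v,H_v)$ completes the proof.

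The step I expect to be delicate is securing the uniform threshold $s$ and the uniform bound $M$ on the intermediate band: both require the limits in $(H_2)$ and in $(\ref{eq 2})$ to hold uniformly with respect to the parameters $x\in\mathbb{R}^2$ and $v\in\mathbb{R}$, since otherwise $s$, $T$ and $M$ --- and hence $c$ --- could depend on $(x,v)$. In the intended setting this uniformity is available (it is automatic, for instance, for nonlinearities such as $H_u=|u|^{r-2}u$ that depend on neither $x$ nor the other variable), and Remark $\ref{rm 3}$ is precisely the uniform form of the growth control at infinity that I would invoke. Making this uniformity explicit is the only genuine content beyond the elementary region-splitting, which is why the full verification is deferred to \cite{SC2}.
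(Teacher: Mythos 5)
The paper does not actually prove Lemma \ref{lem 4.1}; it defers the proof entirely to the reference \cite{SC2}, so there is no in-text argument to compare against. Your three-region splitting (small $|u|$ handled by the limit at the origin in $(H_2)$, large $|u|$ handled by the subcritical growth condition \eqref{eq 2} via Remark \ref{rm 3}, and the intermediate band handled by a uniform bound divided by the positive infimum of $|u|^\delta(e^{\beta u^2}-1)$) is the standard proof of estimates of this type and is sound. You are also right to single out the uniformity issue as the only real content: as written, the limits in $(H_2)$ and \eqref{eq 2} are pointwise in $x$ (and silent about $v$), and the set $\{(x,u,v): s\le |u|\le T\}$ is not compact, so the thresholds $s$, $T$ and the bound $M$ --- hence $c$ --- require these limits to hold uniformly in $(x,v)$; this is implicitly assumed by the paper (Remark \ref{rm 3} is already stated in that uniform form) and is what \cite{SC2} supplies. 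One small point worth tightening if you write this out in full: Remark \ref{rm 3} as printed imposes $|\theta|\ge c_K$ \emph{and} $|\zeta|\ge c_K$ simultaneously, so to cover the region where $|u|$ is large but $|v|$ is not you must read it as the intended decoupled statement (the bound on $H_u$ requiring only $|u|\ge c_K$), exactly as you have done.
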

	By the fundamental theorem of calculus and Lemma $\ref{lem 4.1}$, for every $(x,u,v)\in \mathbb{R}^2\times\mathbb{R}\times\mathbb{R}$ and for every $\delta\ge 1$, we have
	\begin{align}
		\label{eq 13}
	|H(x,u,v)|&\le \int_{0}^{1}\Big|\dfrac{d}{dt}H(x,tu,tv)\Big|dt \nonumber\\
	&\le\int_{0}^{1}\Big|uH_u(x,tu,tv) +vH_v(x,tu,tv)\Big|dt\nonumber\\
	&\le \varepsilon(|u|^2+|v|^2) \int_{0}^{1}tdt+ C|u|\int_{0}^{1}\big(e^{\beta |tu|^2}-1\big)dt + C|v|\int_{0}^{1}\big(e^{\beta |tv|^2}-1\big)dt\nonumber \\
	&\le \varepsilon(|u|^2+|v|^2) + C|u|^\delta\big(e^{\beta |u|^2}-1\big) + C|v|^\delta\big(e^{\beta |v|^2}-1\big).
	\end{align}

	By $(\ref{eq 12})$ and \cite[Lemma 2.2]{dMS} and \cite[Remark 2.3]{dMS} (see also \cite[Remark 5]{SC2}), for every $(u,v)\in X$ and for each $q\ge 1$, there is $C=C(\varepsilon, q)>0$  such that \[C|u|^q\big(e^{\beta |u|^2}-1\big)\;\; \text{and}\;\; C|v|^q\big(e^{\beta |v|^2}-1\big) \;\; \text{belong to}\;\; L^1(\mathbb{R}^2).\]
	It follows that, for any $(u,v)\in X$, \[H(x,u,v)\in L^1(\mathbb{R}^2).\]
		Therefore, the functional $J(u,v)$ given in $(\ref{eq 11})$ is well defined. Furthermore, using variational standard arguments, the functional $J(u,v)$ is of class $\mathcal{C}^1$ in $X$ and 
	\begin{multline}
		\label{eq 14}
		J'(u,v)(w, z) = 	\int_{\mathbb{R}^2} \Big( \nabla v \nabla z + vz \Big) dx - \int_{\mathbb{R}^2} \Big( \nabla u \nabla w + uw \Big) dx \\ -\int_{\mathbb{R}^2} \Big( wH_u(x,u,v) +zH_v(x,u,v) \Big)dx, \;\: \text{for every}\;\; (w, z) \in X. 
	\end{multline}
	
	Hence, the weak solutions of problem $(\ref{eq 1})$ are exactly the critical points of $J(u,v)$ in $X$ (see Definition $\ref{def 4.1}$).
    
	We will need the following convergence result due to de  Figueiredo and al. \cite[Lemma 2.1]{FMR}.
    \begin{lemma}
    \label{lem 4.2}
        Let $\Omega \subset \mathbb{R}^2$ be a bounded domain and 
$f : \mathbb{R}^2 \times \mathbb{R} \to \mathbb{R}$ a continuous function. Then for any sequence $(u_m)$ in $L^1(\Omega)$ such that 
$u_m \to u$ in $L^1(\Omega)$,
\[f(x,u_m) \in L^1(\Omega)\;\; \text{and} \;\; \int_{\Omega} \left| f(x,u_m) u_m \right| \, dx \le \overline{C},\] up to a subsequence, we have
\[
f(x,u_m) \to f(x,u) \;\; \text{in }\;\; L^1(\Omega).
\]
    \end{lemma}
		\begin{lemma}
		\label{lem 4.3}
		The functional $J(u,v)$ given in $(\ref{eq 11})$ satisfies $(A)$, that is, $J(u,v)$ is $\tau-$upper semi-continuous and $\nabla J$ is weakly sequentially continuous.
	\end{lemma}
	\begin{proof}
		
		1. The functional $J$ is  $\tau-$upper semi-continuous:
		for every $c\in \mathbb{R}$, let us show that the set
			$\Big\{(u,v)\in X\,|\, J(u,v)\ge c\Big\}$
		is $\tau-$closed.
        
		Let $(u_n,v_n) \subset X$ such that $(u_n,v_n)\overset{\tau}{\rightarrow} (u,v)$ in $X$ and $c\le J(u_n,v_n)$. Then, $(u_n,v_n)$ is bounded. Indeed, by Remark $\ref{rm 2}$, $\Big(\|Q(u_n,v_n)\|\Big)$ is bounded; since 
		\begin{equation*}
			\varphi(u,v) = \int_{\mathbb{R}^2} \Big(H(x,u,v)\Big)dx \ge 0,
		\end{equation*} 
		then $\|P(u_n,v_n)\|^2 = \|Q(u_n,v_n)\|^2 -2 J(u_n,v_n) -2 \varphi(x,u_n,v_n)\le \|Q(u_n,v_n)\|^2 -2c$.
		It follows that $\Big(\|P(u_n,v_n)\|\Big)$ is also bounded. 
        
		Thus, there is a subsequence of $(u_n,v_n)$ that we still denote $(u_n, v_n)$ such that  
		$(u_n,v_n) \rightharpoonup (u,v)$ in $X$. So, up to subsequence, we may assume  \begin{align*}
        (u_n,v_n) &\to (u,v), \;\; \text{in}\;\; L^2_{loc}(\mathbb{R}^2)\times L^2_{loc}(\mathbb{R}^2), \\
			(u_n(x),v_n(x))&\to (u(x),v(x)),\;\; \text{almost everywhere, in}\;\;\mathbb{R}^2.
		\end{align*}
		
		Since $H(x,u_n,v_n)\ge 0$, then by the Fatou Lemma, we have 
		\begin{equation*}
			\int_{\mathbb{R}^2} H(x,u,v) dx= \int_{\mathbb{R}^2} \underset{n\rightarrow \infty}{\underline{\lim}} H(x,u_n,v_n)dx  \le \underset{n\rightarrow \infty}{\underline{\lim}} \int_{\mathbb{R}^2} H(x,u_n,v_n)dx.
		\end{equation*}
        
		Since $\|\cdot\|$ is weak lower semi-continuous, we have 
		\begin{equation*}
			\|P(u,v)\|^2 \le \underset{k\rightarrow \infty}{\underline{\lim}} \|P(u_n,v_n)\|^2.
		\end{equation*}
        
		Moreover, since $\|Q(u_n,v_n)\|^2 \to \|Q(u,v)\|^2$, we have 
		\begin{equation*}
			\underset{k\rightarrow \infty}{\overline{\lim}} \Big( -\|Q(u_n,v_n)\|^2\Big)= 
			-\|Q(u,v)\|^2 = 	\underset{k\rightarrow \infty}{\underline{\lim}} \Big(-\|Q(u_n,v_n)\|^2\Big).
		\end{equation*}
        
		Hence, 
		\begin{align*}
			-J(u,v) &= \dfrac{1}{2}\Big( \|P(u,v)\|^2-\|Q(u,v)\|^2 \Big) +\varphi (u,v)\\ &\le \underset{k\rightarrow \infty}{\underline{\lim}} \Bigg(\dfrac{1}{2}\Big( \|P(u_n,v_n)\|^2-\|Q(u_n,v_n)\|^2 \Big) +\varphi (u_n,v_n)  \Bigg)\\
			&=  \underset{k\rightarrow \infty}{\underline{\lim}} (-J(u_n,v_n) )
			= - \underset{k\rightarrow \infty}{\overline{\lim}} J(u_n,v_n)
			\le -c.
		\end{align*}
	2. Now, let us show that $\nabla J$ is weakly sequentially continuous: let $(u_n,v_n) \subset X$ such that $(u_n,v_n) \rightharpoonup (u,v)$ in $X$. We have to show that $\nabla J(u_n,v_n) \rightharpoonup \nabla J(u,v)$.
    
    We have 
	\begin{equation*}
		\int_{\mathbb{R}^2}\Big(\nabla v_n \nabla z +v_n z -\nabla u_n \nabla w -u_n w\Big)dx \to \int_{\mathbb{R}^2}\Big(\nabla v\nabla z +vz -\nabla u \nabla w -uw\Big)dx.
	\end{equation*}
Let $(w,z)\in \mathcal{C}_0^\infty(\mathbb{R}^2)\times \mathcal{C}_0^\infty(\mathbb{R}^2)$	and let us set $ \Omega:= supp \,\{(w,z)\}\subset \mathbb{R}^2$, the support of $(w,z)$. We have
	\begin{equation*}
		\int_{\mathbb{R}^2}\Big( wH_u(x,u_n,v_n)+zH_v(x,u_n,v_n)\Big)dx = \int_{\Omega}\Big( wH_u(x,u_n,v_n)+zH_v(x,u_n,v_n)\Big)dx.
	\end{equation*}
	Since $(u_n,v_n) \rightharpoonup (u,v) \;\;\text{in} \;\; X$, by compact Sobolev embedding theorem, going if necessary to a subsequence, we have
	\begin{align*}
		(u_n,v_n) &\to (u,v) \;\; \text{in}\;\; L^2(\Omega)\times L^2(\Omega) \subset L^1(\Omega)\times L^1(\Omega), \;\; \text{as}\;\; n \to \infty.
	\end{align*}
	The functions $H_u$ and $H_v$ are continuous by assumption $(H_1)$. 
	Moreover, by Lemma $\ref{lem 4.1}$ and $(\ref{eq 12})$, we have
	\begin{equation*}
		\int_{\Omega}|H_u(x,u_n(x),v_n(x))|\,dx\le \varepsilon \int_{\Omega}\Big(|u_n| +c\Big(e^{\beta (u_n)^2}-1\Big)\Big)\,dx <+\infty,
	\end{equation*}
	\begin{equation*}
	\int_{\Omega}|H_v(x,u_n(x),v_n(x))|\,dx\le \varepsilon \int_{\Omega}\Big(|v_n| +c\Big(e^{\beta (v_n)^2}-1\Big)\Big)\,dx <+\infty.
	\end{equation*}
    In the same way,
    \begin{equation*}
        \int_{\Omega}|u_nH_u(x,u_n(x),v_n(x))|\,dx\le \varepsilon \int_{\Omega}\Big(|u_n|^2 +c|u_n|\Big(e^{\beta (u_n)^2}-1\Big)\Big)\,dx \le \bar{c}_1,
    \end{equation*}
    \begin{equation*}
        \int_{\Omega}|v_nH_u(x,u_n(x),v_n(x))|\,dx\le \varepsilon \int_{\Omega}\Big(|v_n|^2 +c|v_n|\Big(e^{\beta (v_n)^2}-1\Big)\Big)\,dx \le \bar{c}_2,
    \end{equation*}
    for some constants $\bar{c}_1>0$, $\bar{c}_2>0$. For every $(w,z)\in \mathcal{C}_0^\infty(\mathbb{R}^2)\times\mathcal{C}_0^\infty(\mathbb{R}^2)$,
	 Lemma $\ref{lem 4.2}$ implies that
	\begin{equation*}
		\int_{\Omega}\Big( wH_u(x,u_n,v_n)+ zH_v(x,u_n,v_n)\Big)dx \to \int_{\Omega}\Big( wH_u(x,u,v)+zH_v(x,u,v)\Big)dx, \;\; \text{as}\;\: n\to \infty.
	\end{equation*}
	Thus, by $(\ref{eq 14})$, we have 
	\begin{equation*}
		\Big| J'(u_n,v_n) (w,z)- J'(u,v)(w,z)  \Big| \;\; \rightarrow 0, \;\; \text{as}\;\;n\rightarrow \infty.
	\end{equation*}
	Therefore, for every $(w,z)\in \mathcal{C}_0^\infty(\mathbb{R}^N)\times \mathcal{C}_0^\infty(\mathbb{R}^N) $,
	\begin{equation*}
		\langle \nabla J(u_n,v_n), (w,z) \rangle \to \langle \nabla J(u,v), (w,z) \rangle.
	\end{equation*}
	Hence, $\nabla J(u_n,v_n) \rightharpoonup \nabla J(u,v)$.
	\end{proof}
    
    The next growth condition is essentially identical to that considered by several authors (see e.g., \cite{C}). 
	\begin{lemma}
		\label{lem 4.4}
	Under assumptions $(H_1)$ and $(H_3)$, there exists $C_0>0$ such that 
	\begin{equation*}
		H(x,u,v)\ge C_0\Big(|u|^\mu +|v|^\mu\Big), \;\; \text{for every}\;\; (x,u,v)\in \mathbb{R}^2\times \mathbb{R} \times \mathbb{R}.
	\end{equation*}
	\end{lemma}
	The next two results show that the functional $J$ given in $(\ref{eq 11})$ satisfies the geometric assumptions $(\ref{eq 8})$ and $(\ref{eq 9})$ of Theorem $\ref{theorem 3.1}$.

\begin{lemma}
	\label{lem 4.5}
	There exist $\rho, \; \alpha >0$ such that 
	$b:= \underset{(0,v)\in \partial B_\rho \cap X^+}{\inf} \; J(0,v) \ge \alpha$.
\end{lemma}
	\begin{proof}
		On $X^+$, we have 
		\begin{equation*}
			J(0,v)= \dfrac{1}{2}\|v\|_1^2 -\int_{\mathbb{R}^2} H(x,0,v)dx. 
		\end{equation*}
		From $(\ref{eq 13})$, for every $\varepsilon>0$, there exits $C>0$ such that for every $q>2$,
		\begin{equation*}
			|H(x,u,v)|\le \varepsilon(|u|^2+|v|^2) + C|u|^q\big(e^{\beta |u|^2}-1\big) + C|v|^q\big(e^{\beta |v|^2}-1\big),
		\end{equation*}
		which implies that 
		\begin{equation*}
		J(0,v)\ge \dfrac{1}{2}\|v\|_1^2 -\varepsilon\int_{\mathbb{R}^2} |v|^2dx -C\int_{\mathbb{R}^2}|v|^q(e^{\beta |v|^2}-1)dx.
		\end{equation*}
		Again, by \cite[Remark 5]{SC2},
		\begin{equation*}
C\int_{\mathbb{R}^2}|v|^q(e^{\beta |v|^2}-1)dx \le c_2\|v\|_1^q,
		\end{equation*}
		for some $c_2>0$.
        
		By Sobolev embedding theorem, there is $c_3>0$ such that $|v|_2^2 \le c_3 \|v\|_1^2$. We may choose $\varepsilon$ so that $ \frac{1}{2}-c_3 \varepsilon \ge \frac{1}{4}$.  We obtain
		\begin{align*}
			J(0,v)&\ge \dfrac{1}{2}\|v\|^2_1-\varepsilon c_3\|v\|_1^2-c_2\|v\|_1^q
			\ge \dfrac{1}{4}\|v\|_1^2-c_2\|v\|_1^q.
		\end{align*}
        
	Hence, there exist $\alpha>0$ and we can now choose $\rho>0$ sufficiently small such that 
		\begin{equation*}
			J(0,v)\ge \alpha>0, \;\; \text{whenever}\;\; \|v\|_1 =\rho.
		\end{equation*}
	\end{proof}
	\begin{lemma}
		\label{lem 4.6}
		There is an $(0,e) \in \partial B_1\cap X^+$ and $R> \rho$ such that if 
		\begin{equation*}
			M:= \Big\{(u,0)\in X^-\;|\; \|(u,0)\|\le R\Big\}\oplus \Big\{r(0,e)\;|\;0\le r\le R\Big\},
		\end{equation*}
		then
		\begin{equation*}
			\underset{(u,re)\in \partial M}{\sup}\; J(u,re)\le 0.
		\end{equation*}		
	\end{lemma}
	\begin{proof}
		On $X^-$, since $H(x,u,v)\ge 0$, we have 
		\begin{equation*}
				J(u,0)= -\dfrac{1}{2}\|u\|_1^2 -\int_{\mathbb{R}^2} H(x,u,0)dx \le 0.
		\end{equation*}
		Let $(0,e)\in X^+$ such that $\|(0,e)\|=1$. By Lemma $\ref{lem 4.4}$, there exists $ C_0>0$ such that 
			$H(x,u,v) \ge C_0 (|u|^{\mu}+|v|^\mu)$. For some $\bar{C_0}>0$, we have 
		\begin{align*}
			J\Big((u,0)+r(0,e)\Big) =J(u, re)&=\dfrac{1}{2}\|(0,re)\|^2 -\dfrac{1}{2}\|(u,0)\|^2 -\int_{\mathbb{R}^2}H(x,u, re)dx\\
			&\le \dfrac{1}{2}r^2 -\dfrac{1}{2}\|u\|_1^2 -C_0|u|_\mu^\mu -C_0r^\mu |e|_\mu^\mu\\
            & \le \dfrac{1}{2}r^2 -\dfrac{1}{2}\|u\|_1^2 -\bar{C_0}r^\mu
		\end{align*}

		Since $\mu >2$, it follows that 
		\begin{equation*}
			J(u, re) \rightarrow -\infty, \;\text{whenever}\;\|(u, re)\| \rightarrow +\infty,
		\end{equation*}
		and so, for some $\overline{R}>\rho$ sufficiently large, for every $w \in X^-\oplus \mathbb{R}(0,e)$ such that $\|w\| > \overline{R}$, we have  $J(w)\le 0$ .
	\end{proof}
	
		\subsection{Proof of Theorem $\ref{theo 4.1}$} Here we prove Theorem $\ref{theo 4.1}$. 
        \begin{proof}
        The functional $J$ given in $(\ref{eq 11})$, associated with problem $(\ref{eq 1})$, is of class $\mathcal{C}^1$, $\tau-$upper semi-continuous, and its gradient $\nabla J$ is weakly sequentially continuous by Lemma $\ref{lem 4.3}$. Hence, $J$ satisfies assumption $(A)$ of Theorem $\ref{theorem 3.1}$. Moreover, the geometric assumptions $(J_1)$ and $(J_2)$ of Theorem $\ref{theorem 3.1}$ are fulfilled by Lemma $\ref{lem 4.5}$ and Lemma $\ref{lem 4.6}$, respectively.
        
	By Theorem $\ref{theorem 3.1}$, for some $c\in \mathbb{R}$, there exists $\Big\{(u_n,v_n)\Big\}\subset X$ such that 
	\begin{equation}
		\label{eq 15}
		J(u_n,v_n) \rightarrow c\ge \alpha>0, \;\; J'(u_n,v_n) \rightarrow 0, \;\; \text{as}\;\; n\to \infty.
	\end{equation}
    
	From $(\ref{eq 15})$, the sequence $(u_n,v_n)\subset X$ satisfies (see \cite{FDR} or \cite{ZL})
	\begin{equation}
		\label{eq 16}
	J(u_n,v_n)=c+\delta_n, \;\; \Big|J'(u_n,v_n)(w,z)\Big|\le\varepsilon_n\|(u_n,v_n)\|,
	\end{equation}
	where 
		$w$, $z$ $\in\Big\{u_n, v_n\Big\},\;\; \delta_n\to 0,\;\;\varepsilon_n\to 0, \;\; \text{as}\;\; n\to \infty$.
    
 Taking $(w,z)=(u_n,v_n)$ in $(\ref{eq 16})$, we have 
	\begin{equation*}
		-J'(u_n,v_n)(u_n,v_n)\le \Big|J'(u_n,v_n)(u_n,v_n)\Big| \le\varepsilon_n \|(u_n,v_n)\|,
	\end{equation*}
	and
	\begin{align*}
	\int_{\mathbb{R}^2} \Big(u_n H_u(x,u_n,v_n)+v_nH_v(x,u_n,v_n)\Big)dx&= -J(u_n,v_n)(u_n,v_n)+\|v_n\|_1^2- \|u_n\|_1^2\\
	&= -J(u_n,v_n)(u_n,v_n) +2J(u_n,v_n)+2 \int_{\mathbb{R}^2}H(x,u_n,v_n)dx\\
	&\le C_0+2\delta_n +\varepsilon_n\|(u_n,v_n)\| \\ &+\dfrac{2}{\mu} \int_{\mathbb{R}^2}\Big(u_n H_u(x,u_n,v_n)+v_nH_v(x,u_n,v_n)\Big)dx,
	\end{align*}
	where $C_0=2c$ and $\mu$ is in assumption $(H_3)$.
	Since $\mu>2$, we have $1-\frac{2}{\mu}>0$, and thus, for every $n\in \mathbb{N}$,
	\begin{equation}
		\label{eq 17}
	\Bigg(1-\dfrac{2}{\mu}\Bigg) \int_{\mathbb{R}^2} \Big(u_n H_u(x,u_n,v_n)+v_nH_v(x,u_n,v_n)\Big)dx \le C_0 +2\delta_n +\varepsilon_n\|(u_n,v_n)\|.
	\end{equation}
	
	On the other hand, let $(w,z)=(0,v_n)$, $(w,z)=(u_n,0)$ in $(\ref{eq 16})$. Then, we have 
	\begin{equation*}
		\|v_n\|_1^2-\int_{\mathbb{R}^2} v_n H_v(x,u_n,v_n)dx= J'(u_n,v_n)(0,v_n)\le \varepsilon_n\|v_n\|_1,
	\end{equation*}
	and \begin{equation*}
			\|u_n\|_1^2-\int_{\mathbb{R}^2} u_n H_u(x,u_n,v_n)dx\le \Big|-J'(u_n,v_n)(u_n,0)\Big|\le \varepsilon_n\|u_n\|_1.
	\end{equation*}
	That is, 
	\begin{equation}
		\label{eq 18}
	\|v_n\|_1 \le \int_{\mathbb{R}^2} H_v(x,u_n,v_n)\dfrac{v_n}{\|v_n\|}dx+\varepsilon_n, \;\; \|u_n\|_1 \le \int_{\mathbb{R}^2} H_u(x,u_n,v_n)\dfrac{u_n}{\|u_n\|}dx+\varepsilon_n.
	\end{equation}
    
	We now rely on the following inequality, whose proof is given in \cite[Lemma 2.4]{FDR},
	\begin{equation}
		\label{eq 19}
		st\le
		\begin{cases}
		(e^{t^2} - 1) + s (\log s)^{\frac{1}{2}}, \;\; \text{for all } t\ge0 \text{ and } s \ge e^\frac{1}{4},\\
		(e^{t^2} - 1) +\dfrac{1}{2}s^2, \;\; \text{for all } t\ge0 \text{ and } 0\le s \le e^\frac{1}{4}.
		\end{cases}
	\end{equation}
    
By assumption $(H_3)$, $v$ and $H_v$ have the same sign, as do $u$ and $H_u$. If $v_n>0$, then
	set \[t=V_n:= \dfrac{v_n}{\|v_n\|}\;\; \text{and}\;\; s =\dfrac{H_v(x,u_n,v_n)}{K},\]  where $K$ is the constant appearing in $(\ref{eq 2})$. Otherwise, set \[t=-V_n\;\; \text{and} \;\; s=-\dfrac{H_v(x,u_n,v_n)}{K}.\] 
    
    Without loss of generality, we may assume that $t=V_n$ and $s=\dfrac{H_v(x,u_n,v_n)}{K}$. We have 
	\begin{multline}
		K\int_{\mathbb{R}^2} \dfrac{H_v(x,u_n,v_n)}{K} V_n \,dx \le K \Bigg(\int_{\mathbb{R}^2} \Big(e^{\beta V_n^2}-1\Big)dx\Bigg)\\ +K \int_{\Big\{x\in \mathbb{R}^2\;\Big|\;\dfrac{H_v(x,u_n,v_n)}{K} \ge e^{\frac{1}{4}}\Big\}} \dfrac{H_v(x,u_n,v_n)}{K} \Bigg(\log \dfrac{H_v(x,u_n,v_n)}{K}\Bigg)^{\frac{1}{2}}dx \\+\dfrac{K}{2}\int_{\Big\{x\in \mathbb{R}^2\;\Big|\;\dfrac{H_v(x,u_n,v_n)}{K} \le e^{\frac{1}{4}}\Big\}} \Bigg(\dfrac{H_v(x,u_n,v_n)}{K}\Bigg)^2dx.
	\end{multline}
    Let us set $\mathcal{E}:= \Big\{x\in \mathbb{R}^2\;\Big|\;\dfrac{H_v(x,u_n,v_n)}{K} \le e^{\frac{1}{4}}\Big\}$ and $\mathcal{T}:= \Big\{x\in \mathbb{R}^2\;\Big|\;\dfrac{H_v(x,u_n,v_n)}{K} \ge e^{\frac{1}{4}}\Big\}$.\\
	By $(\ref{eq 12})$, 
	\begin{equation*}
		\int_{\mathbb{R}^2} \Big(e^{\beta V_n^2}-1\Big)dx <+\infty,
	\end{equation*}
	and by $(\ref{eq 2})$,
	\begin{equation}
    \label{eq 21}
		\Bigg(\log \dfrac{H_v(x,u_n,v_n)}{K}\Bigg)^{\frac{1}{2}} \le \beta^{\frac{1}{2}}v_n.
	\end{equation}
	By assumptions $(H_1)-(H_2)$, there exist constants $c_0>0$ and $s_0>0$ such that $H_v(x,u,v)\le c_0v$ for $v\in[0,s_0]$ and $x\in \mathbb{R}^2$, and hence,
    \[H_v(x,u,v) \le c_0v,\;\;\text{for}\;\; \Big\{ x\in \mathbb{R}^2\,\Big|\, v\in[0,s_0]\;\;\text{and}\;\; \dfrac{H_v(x,u,v)}{K} \le e^{\frac{1}{4}} \Big\},\]
    while for $v>s_0$,
    \[H_v(x,u,v) \le \dfrac{e^{1/4}}{s_0}v,\;\;\text{for}\;\; \Big\{ x\in \mathbb{R}^2\,\Big|\, v\in(s_0, +\infty)\;\;\text{and}\;\; \dfrac{H_v(x,u,v)}{K} \le e^{\frac{1}{4}} \Big\}.\]
    Consequently,
    \begin{align}
    \label{eq 22}
        \dfrac{K}{2}\int_{\mathcal{E}}\Bigg(\dfrac{H_v(x,u_n,v_n)}{K}\Bigg)^2dx&=\dfrac{1}{2K}\int_{\mathcal{E}} H_v(x,u_n,v_n) H_v(x,u_n,v_n)\,dx\nonumber \\
        &\le \dfrac{1}{2K}\int_{\mathcal{E}} H_v(x,u_n,v_n)\Big( c_0v_n +\dfrac{e^{1/4}}{s_0}v_n\Big)\,dx \nonumber\\
        &\le C\int_{\mathcal{E}} H_v(x,u_n,v_n)v_n \,dx,
    \end{align}
    for some constant $C>0$. \\	From $(\ref{eq 21})$ and $(\ref{eq 22})$, we have 
	\begin{equation*}
	\int_{\mathbb{R}^2} H_v(x,u_n,v_n)\dfrac{v_n}{\|v_n\|}\, dx \le K_1 + \beta^{\frac{1}{2}}\int_{\mathcal{T}} H_v(x,u_n,v_n)v_n\, dx + C\int_{\mathcal{E}} H_v(x,u_n,v_n)v_n\, dx,
	\end{equation*}
	for some constant $K_1>0$.\\
	Using $(\ref{eq 18})$, we have 
	\begin{equation}
		\label{eq 23}
		\|v_n\|_1 \le K_1+ C\beta^{\frac{1}{2}}\int_{\mathbb{R}^2} H_v(x,u_n,v_n)v_n \,dx +\varepsilon_n.
	\end{equation}
    
	A similar argument shows that 
	\begin{equation}
		\label{eq 24}
		\|u_n\|_1 \le K_2+ C\beta^{\frac{1}{2}}\int_{\mathbb{R}^2} H_u(x,u_n,v_n)u_n \,dx +\varepsilon_n,
	\end{equation}
	for some constant $K_2>0$. Combining $(\ref{eq 23})$, $(\ref{eq 24})$ and $(\ref{eq 17})$, we obtain
	\begin{equation*}
		\|(u_n,v_n)\| \le D_0 +2D_1\delta_n +\varepsilon_n\Big(D_1\|(u_n,v_n)\| +2\Big),
	\end{equation*}
	for some positive constants $D_0$ and $D_1$.\\ It follows that $(u_n,v_n)$ is bounded. Thus, for a subsequence still denoted by $(u_n,v_n)$, there exists $(u_0,v_0)\in X$ such that 
    \begin{center}
	$(u_n,v_n) \to (u_0,v_0) \;\; \text{weakly in}\;\;X, \;\;\text{as}\;\; n \to \infty$.
	\end{center}
    For any $(w,z)\in \mathcal{C}_0^\infty(\mathbb{R}^2)\times \mathcal{C}_0^\infty(\mathbb{R}^2)$, since 
    $(u_n,v_n) \to (u_0,v_0), \;\; \text{in}\;\; L_{loc}^2(\mathbb{R}^2) \times L_{loc}^2(\mathbb{R}^2),\;\; \text{as}\;\; n \to \infty$, and
    $H_u(x,u_n,v_n)$, $H_v(x,u_n,v_n)$ belong to $L^1(\Omega)$, and 
    \[\int_\Omega |u_nH_u(x,u_n,v_n)|\,dx\le \bar{c}_1, \;\; \int_\Omega |u_vH_u(x,u_n,v_n)|\,dx\le \bar{c}_2,\]
    with $\Omega = supp \,\{(w,z)\}\subset\mathbb{R}^2$, then by Lemma $\ref{lem 4.2}$, we deduce 
    \[J'(u_0,v_0)(w,z) =\lim_{n\to \infty} J'(u_n,v_n)(w,z) =0.\]
Hence, $J'(u_0,v_0)(w,z)=0$ for every $(w,z)\in X$, that is, $(u_0,v_0)$ is weak solution of problem $(\ref{eq 1})$.
	
	\begin{claim}
	\textbf{The point $(u_0,v_0)$ is nontrivial.}
	\end{claim}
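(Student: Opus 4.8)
The plan is to argue by contradiction. Assume $(u_0,v_0)=(0,0)$; I will show that this forces $J(u_n,v_n)\to 0$, which contradicts $J(u_n,v_n)\to c\ge\alpha>0$ recorded in $(\ref{eq 15})$.

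First I would show that the $u$-component of the Palais--Smale sequence vanishes in norm, independently of the contradiction hypothesis. Testing $(\ref{eq 16})$ with $(w,z)=(u_n,0)$ and using the formula $(\ref{eq 14})$ for $J'$ gives
\[
\|u_n\|_1^2+\int_{\mathbb{R}^2}u_nH_u(x,u_n,v_n)\,dx=-J'(u_n,v_n)(u_n,0)\le\varepsilon_n\|u_n\|_1 .
\]
By the sign condition in $(H_3)$ the integrand $u_nH_u$ is nonnegative, so both terms on the left are nonnegative; in particular $\|u_n\|_1^2\le\varepsilon_n\|u_n\|_1$, whence $\|u_n\|_1\le\varepsilon_n\to 0$. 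This already reflects the structural fact that $u\equiv 0$ for every solution of $(\ref{eq 1})$.

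Next I would dispose of the nonlinear terms. Under the contradiction hypothesis we have $u_n\to 0$ and $v_n\to 0$ almost everywhere, and by the established bound $|u_n|,|v_n|\le h$ with $h\in H^1(\mathbb{R}^2)$, Lemma $\ref{lem 4.1}$ together with $(\ref{eq 13})$ and $(\ref{eq 12})$ provides a fixed majorant of the form $\varepsilon h^2+c\,h^{\delta+1}\big(e^{\beta h^2}-1\big)\in L^1(\mathbb{R}^2)$ controlling each of $u_nH_u$, $v_nH_v$ and $H(x,u_n,v_n)$. Since these integrands tend to $0$ almost everywhere, the dominated convergence theorem yields $\int_{\mathbb{R}^2}v_nH_v\,dx\to 0$ and $\int_{\mathbb{R}^2}H(x,u_n,v_n)\,dx\to 0$. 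Testing $(\ref{eq 16})$ with $(w,z)=(0,v_n)$ gives $\|v_n\|_1^2=\int_{\mathbb{R}^2}v_nH_v\,dx+o(1)\to 0$, so $\|v_n\|_1\to 0$ as well. Substituting $\|u_n\|_1\to 0$, $\|v_n\|_1\to 0$ and $\int H\to 0$ into $(\ref{eq 11})$ gives $J(u_n,v_n)\to 0$, the desired contradiction. Hence $(u_0,v_0)\neq(0,0)$.

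The step I expect to be the main obstacle is precisely the passage to the limit in the nonlinear terms, because $H_u$ and $H_v$ have critical exponential growth in the Trudinger--Moser sense, so weak convergence alone does not allow one to pass to the limit in $\int v_nH_v$ or $\int H$. The device that makes the argument work is the uniform pointwise domination $|u_n|,|v_n|\le h$ by a single $H^1$ function, which combined with the Trudinger--Moser integrability $(\ref{eq 12})$ produces a fixed $L^1$ majorant and reduces everything to the dominated convergence theorem. Were such a global majorant unavailable, one would instead have to rule out vanishing via a concentration-compactness argument, possibly after translating the sequence.
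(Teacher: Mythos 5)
Your proof is correct and follows the same overall contradiction strategy as the paper: assume $(u_0,v_0)=(0,0)$, show that $\int_{\mathbb{R}^2}u_nH_u\,dx$, $\int_{\mathbb{R}^2}v_nH_v\,dx$ and $\int_{\mathbb{R}^2}H(x,u_n,v_n)\,dx$ all tend to zero, and conclude $J(u_n,v_n)\to 0$, contradicting $(\ref{eq 15})$. Where you differ is in how the vanishing of the nonlinear integrals is justified, and your route is the more robust one. The paper bounds $\int_{\mathbb{R}^2} H_u u_n\,dx\le |u_n|_2^2+C_0|u_n|_p$ and then asserts that $u_n\to 0$ in $L^p(\mathbb{R}^2)$; on the whole plane this does not follow from weak convergence alone (there is no compact embedding), and is really justified only through the pointwise majorant $h\in H^1(\mathbb{R}^2)$ combined with a.e.\ convergence --- precisely the dominated-convergence mechanism, based on Lemma $\ref{lem 4.1}$, $(\ref{eq 13})$ and $(\ref{eq 12})$, that you make explicit. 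Your preliminary observation that testing $(\ref{eq 14})$ with $(w,z)=(u_n,0)$ and using the sign condition in $(H_3)$ gives $\|u_n\|_1^2\le \|u_n\|_1^2+\int_{\mathbb{R}^2}u_nH_u\,dx=-J'(u_n,v_n)(u_n,0)\le\varepsilon_n\|u_n\|_1$, hence $\|u_n\|_1\to 0$ unconditionally, is correct (and it does reveal that every weak solution of $(\ref{eq 1})$ under these hypotheses has $u\equiv 0$); the paper does not record this. Finally, you reach $\int_{\mathbb{R}^2}H(x,u_n,v_n)\,dx\to 0$ directly by domination via $(\ref{eq 13})$, whereas the paper routes through the inequality $0<\mu H\le uH_u+vH_v$ of $(H_3)$; both are valid, and both lead to $J(u_n,v_n)\to 0$ via $(\ref{eq 11})$. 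In short: same skeleton as the paper, with a tighter justification of the limit passages at exactly the point where the paper is most terse.
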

	Indeed, by contradiction, suppose that $(u_0,v_0)=(0,0)$.  By Hölder inequality, we have 
		\begin{equation*}
	\int_{\mathbb{R}^2} H_u(x,u_n,v_n)u_n\,dx\le \Big(\int_{\mathbb{R}^2} |u_n|^t\,dx\Big)^{1/t}\Big(\int_{\mathbb{R}^2} |H_u(x,u_n,v_n)|^{t'} \,dx\Big)^{1/t'},
		\end{equation*}
		where $\frac{1}{t}+\frac{1}{t'}=1$.\\
        Since $H_u$ has a subcritical growth (see $(\ref{eq 2})$),
        \begin{align*}
            \Big(\int_{\mathbb{R}^2} |H_u(x,u_n,v_n)|^{t'} \,dx\Big)^{1/t'} &\le \Big(\int_{\mathbb{R}^2} K^{t'}\big(e^{\beta u_n^2}-1\big)^{t'} \,dx\Big)^{1/t'}\\
            &\le K  \Big(\int_{\mathbb{R}^2} \big(e^{\beta u_n^2}-1\big)^{t'} \,dx\Big)^{1/t'}.
        \end{align*}
        By \cite[Lemma 2.2]{dMS},
        \begin{equation*}
            \int_{\mathbb{R}^2} \big(e^{\beta u_n^2}-1\big)^{t'}\,dx< +\infty.
        \end{equation*}
        We obtain, 
        \begin{equation*}
           \int_{\mathbb{R}^2} H_u(x,u_n,v_n)u_n\,dx\le \kappa\Big(\int_{\mathbb{R}^2} |u_n|^t\,dx\Big)^{1/t},
        \end{equation*}
        for some constant $\kappa>0$. \\
        Since $u_n \rightharpoonup 0$ in $L^t(\mathbb{R}^2)$ as $n\to \infty$, then 
		\begin{equation*}
		\int_{\mathbb{R}^2} H_u(x,u_n,v_n)u_n\,dx \to 0, \;\; \text{as}\;\; n\to \infty.
		\end{equation*}
		In the same way, we have 
			\begin{equation*}
			\int_{\mathbb{R}^2} H_v(x,u_n,v_n)v_n\,dx \to 0, \;\; \text{as}\;\; n\to \infty.
		\end{equation*}
		So, by assumption $(H_3)$, \begin{equation}
			\label{eq 25}
			\int_{\mathbb{R}^2}H(x,u_n,v_n)\,dx \to 0, \;\; \text{as}\;\; n\to \infty.
		\end{equation}
        
	Thus, $(\ref{eq 25})$ with the fact that 
		\begin{equation*}
			J'(u_n,v_n)(u_n,v_n) \to 0, \;\; \text{as}\;\; n\to \infty,
		\end{equation*}
		imply 
		\begin{equation*}
			J(u_n,v_n) \to 0, \;\; \text{as}\;\; n\to \infty.
		\end{equation*}
		This contradicts the fact that $J(u_n,v_n)\to c\ge \alpha> 0, \;\; \text{as}\;\; n\to \infty$.
        
		Consequently, $(u_0,v_0)$ is nontrivial weak solution of problem $(\ref{eq 1})$. The proof of Theorem $\ref{theo 4.1}$ is thus complete. 
        \end{proof}
       \section*{Acknowledgments}
 This work was funded by a  grant from the Natural Sciences and Engineering Research Council of Canada.
	\section*{Disclosure statement}
	The authors declare that they have no financial or non-financial competing interests.
	\newpage

\end{document}